\newtheorem{theorem}{Theorem}[section]
\newtheorem{lemma}[theorem]{Lemma}
\newtheorem{proposition}[theorem]{Proposition}
\newtheorem{corollary}[theorem]{Corollary}
\newtheoremstyle{definition}
  {6pt}
  {6pt}
  {}
  {}
  {\bfseries}
  {.}
  {.5em}
  {}%
\theoremstyle{definition}
\newtheorem{definition}[theorem]{Definition}
\newtheoremstyle{remark}
  {6pt}
  {6pt}
  {}
  {}
  {\bfseries}
  {.}
  {.5em}
  {}%
\theoremstyle{remark}
\newtheorem{remark}[theorem]{Remark}
\newtheoremstyle{note}
  {6pt}
  {6pt}
  {}
  {}
  {\bfseries}
  {.}
  {.5em}
  {}%
\theoremstyle{note}
\newtheorem{note}[theorem]{Note}
\renewcommand\@makefntext[1]{%
\setlength\parindent{1em}%
\noindent \makebox[1.8em][r]{}{#1}} \makeatother
\begin{document}
\parskip 4pt
\large \setlength{\baselineskip}{15 truept}
\setlength{\oddsidemargin} {0.5in} \overfullrule=0mm
\def\bfh{\vhtimeb}
\date{}
\title{\bf \large ON THE EULER-POINCARE CHARACTERISTIC AND\\ 
MIXED MULTIPLICITIES OF MAXIMAL DEGREES\\
}
\def\b{\vntime}

\author{
\begin{tabular}{ll}
  Truong Thi Hong Thanh & Duong Quoc Viet  \\
\small thanhtth@hnue.edu.vn  & \small vduong99@gmail.com   \\
\end{tabular}\\
\small Department of Mathematics, Hanoi National University of Education\\
\small 136 Xuan Thuy street, Hanoi, Vietnam\\
}
 \date{}
\maketitle \centerline{
\parbox[c]{13.5cm}{
\small{\bf ABSTRACT:} This paper  defines the
Euler-Poincar\'{e} characteristic of joint reductions of ideals which concerns the maximal  terms in the
Hilbert polynomial; characterizes the positivity of mixed
multiplicities in terms of minimal joint reductions; proves the additivity and other elementary
properties for mixed
multiplicities. The results of the paper together with the results of \cite{TV1} seem to show a natural and nice picture of mixed multiplicities of maximal degrees.}}

 \section{Introduction} \noindent
\noindent
Let $(A, \frak{m})$ be a Noetherian local ring  with maximal ideal
$\mathfrak{m}$ and infinite residue field $k = A/\mathfrak{m}.$  Let $M$
be a finitely generated $A$-module.   Let $J$ be an $\frak
m$-primary ideal, $ I_1,\ldots, I_d$ be ideals of $A.$    Put
\begin{align*}
&\mathbf{e}_i = (0, \ldots,  \stackrel{(i)}{1},  \ldots, 0); {\bf n} =(n_1,\ldots,n_d); {\bf k} =(k_1,\ldots,k_d); {\bf
0}=(0,\ldots,0)\in  \mathbb{N}^{d}; \\
&{\bf 1}=(1,\ldots,1)\in  \mathbb{N}^{d}; |{\bf k}| = k_1 + \cdots + k_d;  \mathrm{\bf I}= I_1,\ldots,I_d;
 \mathbb{I}^{\mathrm{\bf n}}= I_1^{n_1}\cdots I_d^{n_d}.
\end{align*} Denote by $P(n_0, {\bf n}, J, \mathbf{I}, M)$ the Hilbert polynomial of the function
$\ell\Big(\dfrac{J^{n_0}\mathbb{I}^{\bf
n}M}{J^{n_0+1}\mathbb{I}^{\bf n}M}\Big),$ by $\bigtriangleup^{(k_0,\mathrm{\bf
k})}P(n_0, {\bf n}, J,  \mathbf{I}, M)$ the $(k_0,\bf
k)$-difference  of the polynomial $P(n_0, {\bf n}, J, \mathbf{I}, M)$. And one  can
write
$P(n_0, {\bf n}, J, \mathbf{I}, M) = \sum_{(k_0, \mathbf{k})\in \mathbb{N}^{d+1}
}e(J^{[k_0+1]}, \mathbf{I}^{[\mathbf{k}]}; M)\binom{n_0 +
k_0}{k_0}\binom{\mathbf{n}+\mathbf{k}}{\mathbf{k}},$ here
 $\binom{\mathbf{n + k}}{\bf n}= \binom{n_1 + k_1}{n_1}\cdots
\binom{n_d + k_d}{n_d}.$
 \footnotetext{\begin{itemize}  \item[
]This research is funded by Vietnam National Foundation for Science and Technology Development (NAFOSTED) under grant number 101.04.2015.01.\item[
]{\bf Mathematics Subject  Classification (2010):} Primary 13H15.
Secondary 13C15, 13D40,14C17. \item[ ]{\bf  Key words and
phrases:} Mixed multiplicity, Euler-Poincare characteristic;
 Joint reduction. \end{itemize}}
Recall that the original mixed
multiplicity theory studied the mixed multiplicities concerning the terms of highest total degree in the Hilbert
polynomial $P(n_0, {\bf n}, J, \mathbf{I}, M)$, i.e., the mixed multiplicities $e(J^{[k_0+1]}, \mathbf{I}^{[\mathbf{k}]}; M)$ with $ k_0 + |\mathrm{\bf
k}|\; = \deg P(n_0, {\bf n}, J, \mathbf{I}, M)$ (see
e.g. [3$-$10, 14$-$16, 18$-$34]). And in a recent paper \cite{TV1}, one considered a larger class than the class of original
mixed multiplicities concerning the terms of maximal degrees in the Hilbert
polynomial $P(n_0, {\bf n}, J, \mathbf{I}, M).$ There is a fact that
 the condition for $\bigtriangleup^{(k_0,\mathrm{\bf k})}P(n_0, {\bf n}, J,
\mathbf{I}, M)$ to be a constant is equivalent to that the term $e(J^{[k_0+1]}, \mathbf{I}^{[\mathbf{k}]}; M)\binom{n_0 +
k_0}{k_0}\binom{\mathbf{n}+\mathbf{k}}{\mathbf{k}}$ in the polynomial $P(n_0, {\bf n}, J, \mathbf{I}, M)$ satisfies the condition $e(J^{[h_0+1]},
\mathbf{I}^{[\mathbf{h}]}; M)=0$ for all $(h_0, \mathbf{h}) >
(k_0,\mathbf{k})$. And in this case, $\bigtriangleup^{(k_0,\mathrm{\bf k})}P(n_0, {\bf n}, J,
\mathbf{I}, M) = e(J^{[k_0+1]}, \mathbf{I}^{[\mathbf{k}]}; M)$
(see e.g.  \cite[Proposition 2.4 and Proposition 2.9]{TV1}). These facts show the natural appearance of the objects given in
\cite{TV1}
by the following definition.

\begin{definition}[{Definition \ref {de2.0+1}}]
 We call  that $e(J^{[k_0+1]},
\mathbf{I}^{[\mathbf{k}]}; M)$ is  the {\it mixed
multiplicity of
 maximal degrees of $M$ with respect to ideals $J,\mathrm{\bf I}$ of the type
$(k_0+1,\mathrm{\bf k})$} if $e(J^{[h_0+1]},
\mathbf{I}^{[\mathbf{h}]}; M)=0$ for all $(h_0, \mathbf{h}) >
(k_0,\mathbf{k}).$
\end{definition}
One has seen the presence of all these mixed multiplicities in \cite[Example 2.12]{TV1}.
The results of \cite{TV1} showed that many important properties
 of usual mixed multiplicities
  not only are still true but also are stated more natural in the broader  class of the mixed multiplicities of maximal degrees.
However, the additivity and other properties of these multiplicities are not yet known. Recall that
by using the multiplicity formula of Rees modules \cite[Theorem 4.4]{HHRT} (which is a generalized version of \cite[Theorem 1.4]{Ve}), although the
additivity of usual mixed multiplicities was solved in \cite{VT1}, but it seems that this method can not be applied to mixed multiplicities of maximal degrees. This paper first studies the additivity and  characterizes the positivity in terms of joint
reductions for these mixed multiplicities.

Remember that Rees in 1984 \cite{Re} gave the notion of joint reductions and considered  the Euler-Poincar\'{e}
series for joint reductions of ideals of dimension $0,$ and from which he showed that each mixed multiplicity is
the multiplicity of a joint reduction.
\begin{definition}[{Definition \ref{de01}}]
Let $\frak I_i$ be a sequence  consisting $k_i$ elements of $I_i$
for all $1 \le i \le d.$
 Put  ${\bf x} = \frak I_1, \ldots, \frak
 I_d$  and $(\emptyset) = 0_A$. Then ${\bf x}$ is called a sequence of {\it the type $\bf k$} in $\bf I.$ And ${\bf x}$
 is called a
 {\it joint
reduction} of $\bf I$ with respect to $M$ of the type $\bf k$ if
$\mathbb{I}^{\mathbf{n} }M = \sum_{i=1}^d(\frak I_i)
\mathbb{I}^{\mathbf{n} - \mathbf{e}_i}M$ for all large $\bf n.$ A joint reduction of the type $\bf k$
is called a {\it minimal joint reduction} if every sequence of the type $\bf h$ in $\bf I$ with $\bf h < \bf k$
is not
   a joint reduction
of $\mathbf{I}$ with respect to $M.$
\end{definition}
\enlargethispage{0.5 cm}

In 1994, by defining the $''$Euler-Poincar\'{e}
polynomial$''$ of joint reductions for  graded modules, Kirby-Rees \cite{KR1}  proved the additive property
and  the additivity and  reduction formula
 for mixed
multiplicities of graded modules. And basing on  the idea  of Serre \cite {Se} and Auslander-Buchsbaum   \cite {AB},
 the authors in \cite{VT3} defined the Euler-Poincar\'{e}
characteristic of mixed multiplicity systems for graded modules.
However, the results obtained from these works  seem to be not close enough to use in proving even the properties for mixed multiplicities of ideals as in \cite{VT1, VT2}.
\enlargethispage{0.5cm}

The above facts are a motivation to encourage us to find a more
specific invariant that first becomes
an effective tool for proving properties of mixed multiplicities of ideals in Noetherian local rings.

 Let ${\bf x} = \mathfrak{I}_1, \ldots,
\mathfrak{I}_d, \mathfrak{I}_0$  be a joint reduction of
$\mathbf{I}, J$ with respect to $M$ of the type $({\bf k},
k_0+1)$, here $ \mathfrak{I}_0 \subset J,  \mathfrak{I}_i \subset
I_i$ $(1 \le i \le d)$ and
  $t_0, t_1, \ldots, t_d$ be  variables over $A.$  Set $X=
\mathfrak{I}_1t_1, \ldots, \mathfrak{I}_dt_d, \mathfrak{I}_0t_0$ and $T^{\bf
  n} = t_1^{n_1}\cdots t_d^{n_d}.$ Rees in \cite{Re} built the Euler-Poincar\'{e}
series basing on the Koszul complex of the module $\bigoplus_{n_0\geqslant0,
  \bf n\geqslant 0}J^{n_0}\mathbb{ I}^{\mathrm{\bf n}}Mt_0^{n_0}T^{\bf
  n}$ with respect to $X.$ Now basing on Kirby and Rees's approach \cite{KR1} in a new context of mixed multiplicities of maximal degrees, we consider the Koszul complex of the module ${\cal M} = \bigoplus_{n_0\geqslant0,
  \bf n\geqslant 0}\mathbb{ I}^{\mathrm{\bf n}}Mt_0^{n_0}T^{\bf
  n}$  with respect to $X,$ and
  build an invariant called the Euler-Poincar\'{e}
characteristic as follows.

    For any $0 \le i \le n = k_0 + |{\bf k}|+1,$ denote by $H_i({X},{\cal M})$
the $i$th homology module of the Koszul complex of  ${\cal M}$ with
respect to ${X}.$ Using results of Kirby and
Rees in \cite{KR1}, we obtain Lemma \ref{le2.8}, and  get by Proposition \ref{thm2.11} and  Lemma \ref{le2.8} that
the sum $\sum_{i=0}^n (-1)^i \ell_A\big(H_i({ X},{\cal
M})_{{(n_0,{\bf n})}}\big)$ is a constant for all large enough $n_0, \bf n.$ And this constant is called
{ \it the Euler-Poincar\'{e}
characteristic of} the joint reduction $\mathbf{x}$ of the module $M$ with respect to the ideals $J, \mathbf{I}$, and denoted by
 $\chi(\mathbf{x}, J, \mathbf{I}, M).$ And our goal is completed by Proposition \ref{le4.2} which shows that  this invariant is additive on $A$-modules and characterizes mixed multiplicities of maximal degrees.

Next, to  state  the  main result, we need to explain more the relationship between some objects. The mixed
multiplicity of  maximal degrees of $M$ with respect to ideals $J,\mathrm{\bf I}$ of
the type $(k_0+1,\mathrm{\bf k})$ is defined if and only if $\bigtriangleup^{(k_0,\mathrm{\bf k})}P(n_0, {\bf n}, J,
\mathbf{I}, M)$ is a constant. This is equivalent to that
there exists  a joint
reduction    of  $\mathbf{I}, J$ with respect to $M$ of the type
$(\mathrm{\bf k}, k_0+1)$ by Proposition \ref{thm2.11} (see Remark \ref{re2.00}).

The paper not only  proves the additivity, but more noticeably characterizes the positivity in terms of minimal  joint
reductions for mixed multiplicities of maximal degrees. And as
one might expect, we obtain the following theorem.
{\setlength{\baselineskip}{13 truept}
\begin{theorem}[{Theorem \ref{thm1.vt}}]\label{thm1.2}
Let $N$ be an $A$-submodule of $M.$ Assume that  the mixed multiplicity  of maximal degrees of $M$ with respect to
 $J, \bf I$ of the type $(k_0+1, {\bf k})$ is defined and let   $\bf x$ be a
joint reduction of the type $(\mathbf{k}, k_0+1)$ of $\mathbf{I}, J$ with respect to $M$. Then $\chi(\mathbf{x}, J, \mathbf{I}, M)$ is independent of $\bf x$ and we have
\begin{enumerate}[{\rm (i)}]
\item $e(J^{[k_0+1]}, \mathbf{I}^{[\mathbf{k}]}; M) =  \bigtriangleup^{(k_0,\mathrm{\bf
k})}P(n_0, {\bf n}, J,  \mathbf{I}, M)=\chi(\mathbf{x}, J, \mathbf{I}, M).$
\item $e(J^{[k_0+1]}, \mathbf{I}^{[\mathbf{k}]}; M) > 0 $ if and only if $\mathbf{x}$ is a minimal joint reduction.
\item $e(J^{[k_0+1]}, \mathbf{I}^{[\mathbf{k}]}; M) = e(J^{[k_0+1]}, \mathbf{I}^{[\mathbf{k}]}; N) +
e(J^{[k_0+1]}, \mathbf{I}^{[\mathbf{k}]}; M/N).$
\end{enumerate}
\end{theorem}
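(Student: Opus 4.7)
The strategy is to establish the chain of identifications in (i) first and then read off (ii) and (iii) from it: (ii) by playing the minimality of $\mathbf{x}$ against Proposition \ref{thm2.11}, and (iii) by exploiting the additivity of the Euler-Poincar\'{e} characteristic on short exact sequences via the long exact sequence in Koszul homology. For (i), the hypothesis that $e(J^{[k_0+1]}, \mathbf{I}^{[\mathbf{k}]}; M)$ is a mixed multiplicity of maximal degrees (Definition \ref{de2.0+1}) says exactly that $e(J^{[h_0+1]}, \mathbf{I}^{[\mathbf{h}]}; M) = 0$ for all $(h_0, \mathbf{h}) > (k_0, \mathbf{k})$. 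Propositions 2.4 and 2.9 of \cite{TV1} then render the first equality tautological, and together with Proposition \ref{thm2.11} guarantee that a joint reduction of type $(\mathbf{k}, k_0+1)$ exists. The second equality in (i), together with the independence of $\chi(\mathbf{x}, J, \mathbf{I}, M)$ from the particular choice of $\mathbf{x}$, is then exactly what Proposition \ref{le4.2} provides.

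For (ii), the easy direction runs as follows: if $\mathbf{x}$ is not minimal, there is a joint reduction of strictly smaller type $(\mathbf{h}, h_0+1) < (\mathbf{k}, k_0+1)$; by Proposition \ref{thm2.11}, $\bigtriangleup^{(h_0, \mathbf{h})}P(n_0, \mathbf{n}, J, \mathbf{I}, M)$ is constant, hence by the equivalence recalled from \cite{TV1} every coefficient of $P$ at an index strictly above $(h_0, \mathbf{h})$ vanishes --- in particular $e(J^{[k_0+1]}, \mathbf{I}^{[\mathbf{k}]}; M) = 0$. For the converse, suppose $e(J^{[k_0+1]}, \mathbf{I}^{[\mathbf{k}]}; M) = 0$; combined with the maximal-degrees hypothesis, every coefficient at an index $\geqslant (k_0, \mathbf{k})$ vanishes. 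Choose an index $(h_0', \mathbf{h}') < (k_0, \mathbf{k})$ that is componentwise-maximal among those carrying a nonzero coefficient, or fall back to the empty-sequence convention $(\emptyset) = 0_A$ of Definition \ref{de01} if all coefficients vanish. At $(h_0', \mathbf{h}')$ the maximal-degrees property holds anew, so $\bigtriangleup^{(h_0', \mathbf{h}')}P$ is constant and Proposition \ref{thm2.11} exhibits a joint reduction of type $(\mathbf{h}', h_0'+1) < (\mathbf{k}, k_0+1)$, contradicting minimality.

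For (iii), the short exact sequence $0 \to N \to M \to M/N \to 0$ induces a short exact sequence $0 \to \mathcal{N} \to \mathcal{M} \to \mathcal{M}/\mathcal{N} \to 0$ of the corresponding multigraded modules over the multi-Rees algebra. The associated long exact sequence in Koszul homology with respect to $X$, evaluated in multidegree $(n_0, \mathbf{n})$, is an exact sequence of finite-length $A$-modules; taking the alternating sum of lengths and passing to the stable regime guaranteed by Lemma \ref{le2.8} yields $\chi(\mathbf{x}, J, \mathbf{I}, M) = \chi(\mathbf{x}, J, \mathbf{I}, N) + \chi(\mathbf{x}, J, \mathbf{I}, M/N)$. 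Applying Proposition \ref{le4.2} to each of $M, N, M/N$ then converts this into the claimed additivity of mixed multiplicities --- crucially, Proposition \ref{le4.2} identifies $\chi$ with the Hilbert-polynomial coefficient without requiring $\mathbf{x}$ to itself be a joint reduction for $N$ and for $M/N$. The step I expect to require the most care is the converse in (ii): extracting from pure vanishing of the coefficient a joint reduction of strictly smaller type, where the descent to a well-chosen index and the handling of degenerate cases must be reconciled with Definition \ref{de01}; part (iii) is, by comparison, largely bookkeeping once the long exact sequence is coupled to Lemma \ref{le2.8} and Proposition \ref{le4.2}.
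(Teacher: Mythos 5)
Your outline for (i) coincides with the paper's: the first equality is Note~\ref{note2}, and the second equality together with the independence from $\mathbf{x}$ is Proposition~\ref{le4.2}(i) combined with Remark~\ref{re4.1a}(i). That part is fine. The problems are in (ii) and, more seriously, in (iii).

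\textbf{On (iii).} The short exact sequence you invoke is not the right one, and the additivity does not follow from it. If $\mathcal{N}$ denotes the Rees module of $N$, then $(\mathcal{M}/\mathcal{N})_{(n_0,\mathbf{n})}\cong \mathbb{I}^{\mathbf{n}}M/\mathbb{I}^{\mathbf{n}}N$, whereas the module whose Euler characteristic computes $\chi(\mathbf{x},J,\mathbf{I},M/N)$ is the Rees module of $M/N$, whose graded piece is $\mathbb{I}^{\mathbf{n}}(M/N)\cong\mathbb{I}^{\mathbf{n}}M/(\mathbb{I}^{\mathbf{n}}M\cap N)$. These differ in general, so the alternating sum over your Koszul long exact sequence yields $\chi(\mathbf{x},J,\mathbf{I},M)$ on one side but not $\chi(\mathbf{x},J,\mathbf{I},N)+\chi(\mathbf{x},J,\mathbf{I},M/N)$ on the other. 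This is exactly the technical heart of Proposition~\ref{le4.2}(iii), and it is not bypassed by your argument. The paper first establishes the monotonicity $\chi(\mathbf{x},J,\mathbf{I},M')\le\chi(\mathbf{x},J,\mathbf{I},M)$ for a submodule $M'\subset M$ (using nonnegativity from Remark~\ref{re2.8a}(iii) applied to the genuine short exact sequence $0\to\mathfrak{R}(\mathbf{I};M')\to\mathfrak{R}(\mathbf{I};M)\to\mathfrak{R}(\mathbf{I};M)/\mathfrak{R}(\mathbf{I};M')\to 0$), then invokes Artin--Rees to pick $k$ with $\mathbb{I}^{\mathbf{n}+k\mathbf{1}}M\cap N=\mathbb{I}^{\mathbf{n}}(I^kM\cap N)$, which gives the exact sequence
$$0\to\mathfrak{R}(\mathbf{I};I^kM\cap N)\to\mathfrak{R}(\mathbf{I};I^kM)\to\mathfrak{R}\Big(\mathbf{I};\tfrac{I^kM}{I^kM\cap N}\Big)\to 0,$$
after which the invariance $\chi(\mathbf{x},J,\mathbf{I},I^mM)=\chi(\mathbf{x},J,\mathbf{I},M)$ from Remark~\ref{re4.1a}(ii) and the squeeze $I^kN\subset I^kM\cap N\subset N$ finish the computation. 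None of this is in your proposal. Your parenthetical claim that the argument does not require $\mathbf{x}$ to be a joint reduction for $N$ and $M/N$ is also backwards: $\chi(\mathbf{x},J,\mathbf{I},N)$ and $\chi(\mathbf{x},J,\mathbf{I},M/N)$ are only defined because $\mathbf{x}$ is such a joint reduction, which is Corollary~\ref{no3.1}.

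\textbf{On (ii).} The paper simply combines (i) with Proposition~\ref{co4.a}. Your re-derivation has two gaps. In the ``easy'' direction, non-minimality of $\mathbf{x}$ only provides a joint reduction of some strictly smaller type $(\mathbf{h},h_0)<(\mathbf{k},k_0+1)$; to feed this to Proposition~\ref{thm2.11} you need its $J$-component to be $\ge1$, and that is not automatic --- it is precisely what the step (iv)$\Rightarrow$(v) in the proof of Proposition~\ref{co4.a} establishes by enlarging the smaller reduction to one of full type and using properness. In the converse, a componentwise-maximal nonzero index $(h_0',\mathbf{h}')<(k_0,\mathbf{k})$ need not satisfy the maximal-degrees property: there can be nonzero coefficients $e(J^{[h_0+1]},\mathbf{I}^{[\mathbf{h}]};M)$ at indices $(h_0,\mathbf{h})$ incomparable to $(k_0,\mathbf{k})$ and hence strictly above your chosen $(h_0',\mathbf{h}')$, so $\bigtriangleup^{(h_0',\mathbf{h}')}P$ need not be constant and Proposition~\ref{thm2.11} does not apply. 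In short, the paper's proof is a one-line citation of Note~\ref{note2}, Remark~\ref{re4.1a}, Proposition~\ref{le4.2} and Proposition~\ref{co4.a} precisely because those results already package the delicate points; your re-derivations of (ii) and (iii) each omit the step that makes them nontrivial.
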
}
Theorem \ref{thm1.2} and its consequences together with the results of \cite{TV1} seem to show a natural and pleasant picture of mixed multiplicities of maximal degrees.

 Using Theorem \ref{thm1.2} (iii), we prove the additivity and
reduction formulas (Corollary
\ref{co4.4a}), and formulas concerning the rank of modules (Corollary
\ref{co4.4}) for mixed
multiplicities of maximal degrees. Applying these results, we immediately recover results on mixed multiplicities in \cite{VT1, VT2}. Moreover, from Theorem \ref{thm1.2} (ii), we  get  a characterization  for the positivity of the usual mixed
multiplicities of ideals in terms of minimal joint reductions (see Remark \ref{re4.1}).

This paper is divided into four  sections. Section 2 is devoted to the discussion of  joint reductions  of ideals and differences of Hilbert polynomials and answers questions: when differences of Hilbert polynomials are constants (Prop.  \ref{thm2.11}) and  are positive constants (Prop. \ref{co4.a}).
Section 3 builds  the Euler-Poincar\'{e} characteristic of joint reductions which corresponds with mixed multiplicities of ideals; studies the positivity, the additivity (Prop.  \ref{le4.2}) of this invariant.
In Section 4, we prove the main theorem and corollaries for mixed multiplicities of ideals.

\section{Joint reductions and differences of Hilbert polynomials}

In this section, in terms of joint reductions, we answer two questions: when differences of Hilbert polynomials are constants (see Proposition \ref{thm2.11}) and when differences of Hilbert polynomials are positive constants (see Proposition \ref{co4.a}).

 Let $\mathbf{k},\mathbf{m}, \mathbf{n}\in \mathbb{N}^d.$ Write $ \mathbf{m} > \mathbf{n}$ if $\mathbf{m}- \mathbf{n}\in \mathbb{N}^d $ and there exists $1 \le i \le d$ such that $m_i > n_i$. In addition, for any numerical polynomial $f(\bf n)$ in $\mathbf{n}\in \mathbb{N}^d$, denote by $\bigtriangleup^{\mathbf{k}}f(\bf n)$ the $\bf
k$-difference  of  $f(\bf n)$, which is defined by  $\bigtriangleup^{\mathbf{0}}f({\bf n}) = f({\bf n})$;
 $\bigtriangleup^{\mathbf{e}_i}f({\bf n}) = f({\bf n} ) - f({\bf n}-  {\bf e}_i)$
and
$\bigtriangleup^{\mathbf{k}}f({\bf n}) =
\bigtriangleup^{\mathbf{e}_i}(\bigtriangleup^{\mathbf{k}- \mathbf{e}_i}f({\bf n}))
 $ for $ \mathbf{k} \ge \mathbf{e}_i.$

We assign $\dim M= -\infty$ for $M= 0$ and
the degree $-\infty$ to the zero polynomial. Set $I  =
I_1\cdots I_d; \overline {M}= M/0_M: I^\infty; q=\dim \overline
{M}.$ Recall that the Hilbert function
$\ell\Big(\dfrac{J^{n_0}\mathbb{I}^{\bf
n}M}{J^{n_0+1}\mathbb{I}^{\bf n}M}\Big)$ is a  polynomial of total
degree $q-1$ for all large $n_0, \bf n$ by \cite[Proposition
3.1]{Vi} (see \cite{MV}). Denote
this Hilbert polynomial  by $P(n_0, {\bf n}, J, \mathbf{I}, M)$.

\begin{remark}\label{re2.00a} We have  $\deg P(n_0, {\bf n}, J, \mathbf{I}, M) = q-1.$
 Put
$n_1 = \cdots = n_d = m$ and fix large enough $m$ such that $ P(n_0,
m{\bf 1}, J, \mathbf{I}, M) =
\ell_A\Big(\frac{J^{n_0}I^mM}{J^{n_0+1}I^mM}\Big)$ for  large enough $n_0$ and $0_M:I^\infty = 0_M:I^m.$
 Then $\dim I^mM = \dim \overline{M}$ and $ P(n_0,
m{\bf 1}, J, \mathbf{I}, M)$ is a
polynomial in $n_0$ of degree $\dim \overline{M} -1.$
So  $q-1 = \deg P(n_0,{\bf n},
J, \mathbf{I}, M)= \deg P(n_0, m{\bf 1}, J, \mathbf{I}, M).$
Hence  $\bigtriangleup^{(k_0,\mathbf{0})}P(n_0, {\bf n}, J, \mathbf{I}, M)= \bigtriangleup^{k_0}P(n_0,
m{\bf 1}, J, \mathbf{I}, M) $  if one of the sides is a constant. From this it follows that $\bigtriangleup^{(1,\mathbf{0})}P(n_0, {\bf n}, J, \mathbf{I}, M)= 0$ if and only if $P(n_0, {\bf n}, J, \mathbf{I}, M)$ is a constant, and it also follows that  $\bigtriangleup^{(k_0,\mathbf{0})}P(n_0, {\bf n}, J, \mathbf{I}, M)$ is a constant if and only if $k_0 \ge \deg P(n_0, {\bf n}, J, \mathbf{I}, M)= \dim M/0_M:I^\infty-1.$
\end{remark}

The concept of joint reductions of $\mathfrak{m}$-primary ideals
was given by Rees \cite{Re} in 1984 and was extended to
the set of arbitrary ideals by \cite{Oc, Vi2,  Vi4,  VDT, {VT4}}. In this paper we use this concept  in  another convenient presentation (see \cite{VDT}).

\begin{definition}[see \cite{Re, VDT}] \label{de01}
Let $\frak I_i$ be a sequence  consisting $k_i$ elements of $I_i$
for all $1 \le i \le d.$
 Put  ${\bf x} = \frak I_1, \ldots, \frak
 I_d$  and $(\emptyset) = 0_A$. Then ${\bf x}$ is called a {\it joint
reduction} of $\bf I$ with respect to $M$ of the type $\bf k$ if
$\mathbb{I}^{\mathbf{n} }M = \sum_{i=1}^d(\frak I_i)
\mathbb{I}^{\mathbf{n} - \mathbf{e}_i}M$ for all large $\bf n.$
 A joint reduction of the type $\bf k$
is called a {\it minimal joint reduction} if every sequence of the type $\bf h$ in $\bf I$ with $\bf h < \bf k$
is not
   a joint reduction
of $\mathbf{I}$ with respect to $M.$
 \end{definition}

And a useful tool used in this paper is the concept of weak-(FC)-sequences which was defined  in
\cite{Vi} (see e.g. \cite{DMT, MV, DQV, VDT}) as the following definition.

\begin{definition}[\cite{Vi}] \label{dn23}   An element $x \in A$ is called a
{\it weak}-(FC)-{\it element} of $\bf I$ with respect to $M$ if
there exists $1 \leqslant i \leqslant d$ such that $x \in I_i$ and
the following conditions are satisfied:
 \begin{enumerate}[(FC1):]
 \item $x{M}\bigcap \mathbb{I}^{\mathbf{n}}{M}
= x\mathbb{I}^{\mathbf{n}-\mathbf{e}_i}{M}$ for all large $\bf n.$

\item $x$ is an $I$-filter-regular element with respect to $M,$
i.e.,\;$0_M:x \subseteq 0_M: I^{\infty}.$
 \end{enumerate}

Let $x_1, \ldots, x_t$ be elements of $A$. For any $0\leqslant i
\leqslant t,$ set\;      $M_i = {M}\big/{(x_1, \ldots, x_{i})M}$.
Then $x_1, \ldots, x_t$ is called  a {\it weak}-(FC)-{\it
sequence}   of $\mathbf{I}$ with respect to $M$ if $x_{i + 1}$ is
a weak-(FC)-element  of $\mathbf{I}$ with respect to  $M_i$ for
all $0 \leqslant i \leqslant t - 1$. If a weak-(FC)-sequence  of
$\mathbf{I}$ with respect to $M$ consists of $k_1$ elements of
$I_1,\ldots,k_d$ elements of  $I_d$ ($k_1,\ldots,k_d \ge 0$), then
it is called  a weak-(FC)-sequence  of $\mathbf{I}$ with respect
to $M$ of the type $\mathbf{k}$.  A
weak-(FC)-sequence $x_1, \ldots, x_t$ is called a {\it maximal
weak}-(FC)-{\it sequence} if $I\nsubseteq
\sqrt{\mathrm{Ann}({M_{t-1})}}$ and $I\subseteq
\sqrt{\mathrm{Ann}(M_t)}.$
\end{definition}

The following  note recalls some  important properties  of weak-(FC)-sequences.
\begin{note}\label{note12} Let $\frak I_i$ be a sequence  of  elements of $I_i$
for all $1 \le i \le d$.
Assume that $\frak I_1, \ldots, \frak
 I_d$ is a weak-(FC)-sequence  of $\mathbf{I}$ with respect
to $M$. Then
\begin{equation}\label{vttt1}(\frak I_1, \ldots, \frak
 I_d)M \bigcap \mathbb{I}^{\mathbf{n}}{M}
=  \sum_{i=1}^d(\frak I_i)
\mathbb{I}^{\mathbf{n} - \mathbf{e}_i}M \end{equation} for all large $\bf n$ by \cite[Theorem
 3.4 (i)]{Vi4}. And if
 $x \in I_i$  $(1 \le i \le d)$ is  a weak-{\rm (FC)}-element  of
$\mathbf{I}, J$ with respect to $M,$ then
\begin{equation}\label{vttt2}P(n_0, {\bf n}, J, \mathbf{I}, M/xM)= P(n_0, {\bf n}, J, \mathbf{I}, M)-P(n_0, {\bf n}-\mathbf{e}_i, J, \mathbf{I}, M) \end{equation}  by \cite [(3)]{DV} (or the proof of \cite[Proposition 3.3 (i)]{MV}).
\end{note}

  The relationship between weak-(FC)-sequences and differences of Hilbert polynomials
     is showed by the
following proposition.
\begin{proposition}\label{pro2.4b} Set $J= I_0.$ Let $k_i > 0$,
 $x \in I_i$  $(0 \le i \le d)$ be  a weak-{\rm (FC)}-element  of
$\mathbf{I}, J$ with respect to $M$. Then  $\dim M/xM:I^\infty
\leqslant \dim M/0_M:I^\infty -1$ and
$$\bigtriangleup^{(k_0,\mathrm{\bf k})}P(n_0, {\bf n}, J,
\mathbf{I}, M)  =\begin{cases}
\bigtriangleup^{(k_0,\mathrm{\bf k} -  \mathbf{e}_i)}P(n_0, {\bf n}, J,
\mathbf{I},  M\big/xM) \quad\text{ if }\; 1 \le i \le d\\
\bigtriangleup^{(k_0-1,\mathrm{\bf k})}P(n_0, {\bf n}, J,
\mathbf{I},  M\big/xM)
\quad\;\text{ if }\; i = 0.
\end{cases} $$
\end{proposition}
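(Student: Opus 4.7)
The plan is to reduce the whole statement to a routine manipulation using equation~(\ref{vttt2}) together with the commutativity of the difference operators. I would handle the case $1 \le i \le d$ first, and then observe that the case $i = 0$ follows from an exactly parallel argument (with $J$ playing the role of $I_i$ and the variable $n_0$ playing the role of $n_i$).

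For $1 \le i \le d$: since $k_i > 0$, I factor the difference operator as
\[
\bigtriangleup^{(k_0,\mathbf{k})} \;=\; \bigtriangleup^{(k_0,\mathbf{k}-\mathbf{e}_i)}\circ \bigtriangleup^{(0,\mathbf{e}_i)},
\]
which is legitimate because the various one-step differences act on distinct variables and therefore commute. Applying the inner operator $\bigtriangleup^{(0,\mathbf{e}_i)}$ to $P(n_0,\mathbf{n},J,\mathbf{I},M)$ produces $P(n_0,\mathbf{n},J,\mathbf{I},M) - P(n_0,\mathbf{n}-\mathbf{e}_i,J,\mathbf{I},M)$, which by the weak-(FC) identity~(\ref{vttt2}) is exactly $P(n_0,\mathbf{n},J,\mathbf{I},M/xM)$. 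Feeding this back into the outer operator yields the claimed formula. For $i=0$, where $x\in J$, the same scheme works once one proves the analogue
\[
P(n_0,\mathbf{n},J,\mathbf{I},M/xM) \;=\; P(n_0,\mathbf{n},J,\mathbf{I},M)-P(n_0-1,\mathbf{n},J,\mathbf{I},M),
\]
which is established by repeating verbatim the short exact sequence argument in \cite{DV} (or the proof of \cite[Proposition 3.3 (i)]{MV}), with the variable $n_0$ in place of $n_i$; the only ingredients used there are the (FC1) condition and filter-regularity, both of which $x$ enjoys with respect to $J$.

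The dimension inequality is then essentially a free corollary. Equation~(\ref{vttt2}) and its $i=0$ counterpart both realise $P(n_0,\mathbf{n},J,\mathbf{I},M/xM)$ as a first difference of $P(n_0,\mathbf{n},J,\mathbf{I},M)$ in a single variable, so
\[
\deg P(n_0,\mathbf{n},J,\mathbf{I},M/xM) \;\le\; \deg P(n_0,\mathbf{n},J,\mathbf{I},M)-1.
\]
Invoking the translation in Remark~\ref{re2.00a}, namely $\deg P(n_0,\mathbf{n},J,\mathbf{I},-) = \dim(-)/0:I^\infty - 1$, and noting that $(M/xM)/0_{M/xM}:I^\infty \cong M/(xM:I^\infty)$, the degree inequality becomes the dimension inequality claimed.

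There is no real obstacle; the only point requiring a little care is that the excerpt states~(\ref{vttt2}) only for $1\le i\le d$, so one must separately record the parallel identity for $x\in J$ before the formal manipulation of difference operators can be carried out uniformly. Everything else is purely formal once that is in place.
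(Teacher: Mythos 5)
Your proposal is correct and takes essentially the same route as the paper: apply the one-variable difference identity~(\ref{vttt2}) to peel off a single step, deduce the degree drop and hence the dimension inequality via Remark~\ref{re2.00a}, and then apply the remaining difference operator to get the displayed formula. The only point you handle more carefully than the paper does is the case $i=0$: the paper simply writes its equation~(\ref{vttt}) ``by~(\ref{vttt2})'' without comment, whereas you correctly observe that Note~\ref{note12} records~(\ref{vttt2}) only for $1\le i\le d$ and that the $x\in J$ analogue must be noted separately (by the same argument from \cite{DV} applied to the variable $n_0$), which is a legitimate and slightly more explicit presentation of the same idea.
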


\begin{proof} By (\ref{vttt2}) in Note \ref{note12}, we obtain
 \begin{equation}\label{vttt}P(n_0, {\bf n}, J, \mathbf{I}, M/xM)=\begin{cases}
\bigtriangleup^{(0,\mathrm{\bf e}_i)}P(n_0, {\bf n}, J,
\mathbf{I}, M) \quad \text{ if }\; 1 \le i \le d\\
\bigtriangleup^{(1,\mathrm{\bf 0})}P(n_0, {\bf n}, J, \mathbf{I},
M) \quad \;\text{ if }\; i = 0.
\end{cases} \end{equation}
 By (\ref{vttt}), it follows that $$\deg P(n_0, {\bf n}, J, \mathbf{I}, M/xM) \le \deg P(n_0, {\bf n}, J, \mathbf{I}, M)-1.$$ Hence
$$\dim M/xM:I^\infty \leqslant \dim M/0_M:I^\infty -1$$ by Remark \ref{re2.00a}.
Also by (\ref{vttt}) we get
$$\bigtriangleup^{(k_0,\mathrm{\bf k})}P(n_0, {\bf n}, J,
\mathbf{I}, M)  =\begin{cases}
\bigtriangleup^{(k_0,\mathrm{\bf k} -  \mathbf{e}_i)}P(n_0, {\bf n}, J,
\mathbf{I},  M\big/xM) \quad\text{ if }\; 1 \le i \le d\\
\bigtriangleup^{(k_0-1,\mathrm{\bf k})}P(n_0, {\bf n}, J,
\mathbf{I},  M\big/xM)
\quad\;\text{ if }\; i = 0.
\end{cases} $$
\end{proof}

By the way, we  have some explanations for choosing  the element $x$ in  (\ref{vttt}).

\begin{remark}\label{re2.00vt} In studying mixed multiplicities, one always needs the equation (\ref{vttt}). To have (\ref{vttt}) one used different sequences: Risler and Teissier in 1973 \cite{Te} used superficial sequences of $\frak m$-primary ideals; Viet in 2000 \cite{Vi} used  weak-(FC)-sequences; Trung in 2001 \cite{Tr2} used $''$bi-filter-regular sequences$''$;  Trung and Verma in 2007 \cite{TV} used $(\varepsilon_1,\ldots,\varepsilon_m)$-superficial sequences. However, \cite[Remark 3.8]{DMT} showed that the sequences used in \cite{Te, Tr2, TV} are weak-(FC)-sequences.
Moreover, \cite[Remark 4.1]{Vi6} and \cite[Theorem 3.7 and Remark 3.8]{DMT} seem
to account well for the minimum of conditions of
weak-(FC)-sequences that is used in the proof of (\ref{vttt}) (see \cite[Remark 3.9]{DMT}). This explains why we need to use weak-(FC)-sequences in this paper.
\end{remark}

The next proposition answers the question when the $(k_0,\mathrm{\bf
k})$-difference  of the Hilbert polynomial is a constant  in terms of joint reductions   and
weak-(FC)-sequences.

{\setlength{\baselineskip}{14 truept}
\begin{proposition}\label{thm2.11}
The following statements are equivalent:
\begin{itemize}
\item[$\mathrm{(i)}$] There exists a weak-{\rm (FC)}-sequence  of
$\mathbf{I}, J$ with respect to $M$ of the type $(\mathrm{\bf k},
k_0+1)$ which is a joint reduction of $M$  with respect to
$\mathbf{I}, J$.
 \item[$\mathrm{(ii)}$] There exists  a joint
reduction  of  $\mathbf{I}, J$ with respect to $M$ of the type
$(\mathrm{\bf k}, k_0+1).$
 \item[$\mathrm{(iii)}$]
$\bigtriangleup^{(k_0,\mathrm{\bf k})}P(n_0, {\bf n}, J,
\mathbf{I}, M)$ is a constant.
  \end{itemize}
\end{proposition}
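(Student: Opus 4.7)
The plan is to prove the cycle of implications $(\mathrm{i}) \Rightarrow (\mathrm{ii}) \Rightarrow (\mathrm{iii}) \Rightarrow (\mathrm{i})$. Among these, $(\mathrm{i}) \Rightarrow (\mathrm{ii})$ is immediate from Definition \ref{de01}: a sequence that is simultaneously weak-(FC) and a joint reduction is, by definition, a joint reduction.

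For $(\mathrm{iii}) \Rightarrow (\mathrm{i})$, I would induct on the total length $N = k_0 + |\mathbf{k}| + 1$ of the sequence to be constructed. For the inductive step, assuming $\bigtriangleup^{(k_0,\mathbf{k})}P(n_0,\mathbf{n},J,\mathbf{I},M)$ is a constant, pick an index $i$ (either $1 \le i \le d$ with $k_i > 0$, or $i = 0$ with $k_0 > 0$, so that Proposition \ref{pro2.4b} applies) and choose a weak-(FC)-element $x$ of $\mathbf{I}, J$ with respect to $M$ lying in $I_i$ (respectively in $J$); such $x$ exists by a standard generic-choice argument since the residue field is infinite. Proposition \ref{pro2.4b} then ensures that the correspondingly reduced difference of $P(\cdot, M/xM)$ is still a constant, and the induction hypothesis applied to $M/xM$ yields a weak-(FC)-sequence $\mathbf{x}'$ on $M/xM$ of the smaller type that is a joint reduction of $M/xM$. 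Prepending $x$ gives a weak-(FC)-sequence on $M$ of the required type $(\mathbf{k}, k_0+1)$; to upgrade it to a joint reduction for $M$, I would lift the joint reduction identity from $M/xM$ through the quotient and use the intersection formula (\ref{vttt1}) of Note \ref{note12} to control $xM \cap \mathbb{I}^{\mathbf{n}}M$. The base case $N = 1$ (i.e.\ $k_0 = 0$ and $\mathbf{k} = \mathbf{0}$) requires exhibiting a single element $x \in J$ that gives the joint reduction $\mathbb{I}^{\mathbf{n}}JM = x\mathbb{I}^{\mathbf{n}}M$ for large $\mathbf{n}$; the constancy hypothesis together with Remark \ref{re2.00a} confines $\dim \overline{M} \le 1$, making such an $x$ available from classical reduction theory.

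For $(\mathrm{ii}) \Rightarrow (\mathrm{iii})$, the main obstacle is that an arbitrary joint reduction need not consist of weak-(FC)-elements, so Proposition \ref{pro2.4b} is not directly available. My plan is to refine the given joint reduction $\mathbf{x}$ into a weak-(FC) joint reduction of the same type by a generic perturbation of each $\mathfrak{I}_i$: the weak-(FC) condition is generic (open), while the joint reduction identity depends only on the images of $\mathfrak{I}_i$ modulo a finite collection of primes, so perturbations chosen in those complements preserve it. Once refined, I invoke the already-established $(\mathrm{i}) \Rightarrow (\mathrm{iii})$, namely: iterating Proposition \ref{pro2.4b} through the first $|\mathbf{k}|+k_0$ entries of $\mathbf{x}$ equates $\bigtriangleup^{(k_0,\mathbf{k})}P(\cdot,M)$ with the Hilbert polynomial $P(\cdot,M')$ of the intermediate quotient $M'$, and then the joint reduction identity combined with the weak-(FC) property of the final $J$-element, together with a Nakayama-type argument using the $\mathfrak{m}$-primariness of $J$, forces $P(\cdot,M')$ to be an eventual constant. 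The technical heart of the argument, and the main obstacle I anticipate, is the refinement step: balancing the open condition of weak-(FC) against the closed identity of joint reduction in a way that preserves both simultaneously. I expect to rely on results on joint reductions from \cite{VDT, VT4} and on the construction of weak-(FC)-sequences in \cite{Vi4} to push this through.
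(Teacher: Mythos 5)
Your route is workable but has two concrete soft spots. First, the base case of your $(\mathrm{iii}) \Rightarrow (\mathrm{i})$ induction: ``classical reduction theory'' gives a single-element reduction of $J$ on a \emph{fixed} module, not a single $x \in J$ satisfying $J^{n_0}\mathbb{I}^{\mathbf{n}}M = xJ^{n_0-1}\mathbb{I}^{\mathbf{n}}M$ for all large $(n_0,\mathbf{n})$ simultaneously. To close it you actually need the same chain the paper uses for $(\mathrm{i}) \Leftrightarrow (\mathrm{iii})$: take a weak-(FC)-element $x \in J$ by \cite[Proposition 2.3]{VDT}, use Proposition \ref{pro2.4b} and Remark \ref{re2.00a} to get $P(n_0,\mathbf{n},J,\mathbf{I},M/xM)=0$, apply Nakayama with $J$ being $\mathfrak{m}$-primary to deduce $J^{n_0}\mathbb{I}^{\mathbf{n}}M \subset xM$, and then intersect using (FC1). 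Once you spell this out, your induction offers no savings over the paper's non-inductive argument, which runs the \emph{whole} collapse at once: pick any weak-(FC)-sequence $\mathbf{x}=x_1,\ldots,x_n$ of type $(\mathbf{k},k_0+1)$ with $x_n\in J$, use Proposition \ref{pro2.4b} repeatedly to identify $\bigtriangleup^{(k_0,\mathbf{k})}P(n_0,\mathbf{n},J,\mathbf{I},M)$ with $P(n_0,\mathbf{n},J,\mathbf{I},M/(x_1,\ldots,x_{n-1})M)$, and then observe that constancy of the latter is equivalent — via Remark \ref{re2.00a}, Nakayama, and (\ref{vttt1}) — to $\mathbf{x}$ being a joint reduction. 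That single argument proves both directions of $(\mathrm{i})\Leftrightarrow(\mathrm{iii})$ and has no base case.

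The second soft spot is the refinement in your $(\mathrm{ii}) \Rightarrow (\mathrm{iii})$. The ``generic perturbation'' intuition (open condition for weak-(FC), closed identity for joint reduction) is the right heuristic, but it does not by itself produce a perturbation that preserves both; what makes it work are the two specific results the paper cites — \cite[Lemma 17.3.2]{SH}, which lets you replace one entry of a joint reduction by a sufficiently general element of the same ideal while staying a joint reduction, and \cite[Proposition 2.3]{VDT}, which guarantees that the general element can be taken weak-(FC) — applied one coordinate at a time, together with the separate treatment of the degenerate case $IJ \subset \sqrt{\mathrm{Ann}(M)}$, in which every sequence of the right shape is vacuously weak-(FC). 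Your inductive lifting step (transferring the joint-reduction identity from $M/xM$ back to $M$ via (FC1)) is correct and is exactly the computation one would do; only the base case and the refinement need the specific lemmas rather than the informal arguments you give.
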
}
\begin{proof}
  (i) $\Rightarrow$ (ii): holds  trivially. (ii) $\Rightarrow$ (i):
Let $\mathbf{x}= x_1, \ldots, x_n $ be a joint reduction  of
$\mathbf{I}, J$ with respect to $M$  of the type $(\mathrm{\bf k},
k_0+1).$ Set $J= I_0.$ Assume that $x_1 \in I_i$ for $0 \le i \le d.$
Now, if $IJ \nsubseteq \sqrt{\mathrm{Ann}(M)},$ then
 by \cite[Lemma 17.3.2]{SH} and
\cite[Proposition 2.3]{VDT}, there exists  a weak-(FC)-element
$y_1 \in I_i$  with respect to $M$  such that $y_1, x_2, \ldots,
x_n $ is also a joint reduction. If $IJ \subset \sqrt{\mathrm{Ann}(M)},$ then it can be verified that
$\mathbf{x}= x_1, \ldots, x_n $ is a
weak-(FC)-sequence by Definition \ref{dn23}. Hence there exists  a
weak-(FC)-sequence $y_1, y_2, \ldots, y_n $ of $\mathbf{I}, J$
with respect to $M$ of the type $(\mathrm{\bf k}, k_0+1)$ such
that $y_1, y_2, \ldots, y_n $ is a joint reduction of $\mathbf{I},
J$ with respect to $M$ of the type $(\mathrm{\bf k}, k_0+1)$ by
induction. (i) $\Leftrightarrow$ (iii): By \cite[Proposition
2.3]{VDT} (see \cite[Remark 1]{Vi}), there exists a
weak-$\mathrm{(FC)}$-sequence $\mathbf{x}= x_1, \ldots, x_n$ of
$\mathbf{I}, J$ with respect to $M$  of the type $(\mathrm{\bf k},
k_0+1)$ with $x_n \in J.$ Then $ P(n_0, {\bf n}, J, \mathbf{I}, M/(\mathbf{x})M)=0$
if and only if  $J^{n_0}\mathbb{I}^{\mathbf{n}}M \subset (\mathbf{x})M$ for all large $n_0, \bf n.$
This is equivalent to that
 $\mathbf{x}$ is a joint reduction by (\ref{vttt1}) in Note \ref{note12}  (see also \cite[Corollary
3.5]{VT4}). On the other hand,  from Proposition \ref{pro2.4b},
we obtain
 $$ P(n_0, {\bf n}, J, \mathbf{I}, M/(\mathbf{x})M)=
\bigtriangleup^{(1, \mathbf{0})}P(n_0, {\bf n}, J, \mathbf{I},
M/(\mathbf{x'})M),$$ here $\mathbf{x'} = x_1,\ldots,x_{n-1}.$
So $\mathbf{x}$ is a joint reduction if and only if
$$\bigtriangleup^{(1, \mathbf{0})}P(n_0, {\bf n}, J, \mathbf{I},
M/(\mathbf{x'})M) =0.$$   This is equivalent to $P(n_0, {\bf
n}, J, \mathbf{I}, M/(\mathbf{x'})M)$ is a constant
 by  Remark \ref{re2.00a}.
 Note that
$$\bigtriangleup^{(k_0,\mathrm{\bf k})}P(n_0, {\bf n}, J,
\mathbf{I}, M) = P(n_0, {\bf n}, J, \mathbf{I},
M/(\mathbf{x'})M)$$ by Proposition \ref{pro2.4b}.
  Hence
$\mathbf{x}$ is a joint reduction of $\mathbf{I}, J$ with respect
to $M$  of the type $(\mathrm{\bf k}, k_0+1)$ if and only if
$\bigtriangleup^{(k_0, \mathbf{k})}P(n_0, {\bf n}, J, \mathbf{I},
M)$ is a constant.
\end{proof}

The existence of joint reductions with respect to modules on an exact sequence is shown by the following result.

\begin{corollary} \label{no3.1} Let $0\longrightarrow N \longrightarrow M\longrightarrow
P\longrightarrow 0$ be
 an exact sequence of $A$-modules.
   If $\bigtriangleup^{(k_0,\mathrm{\bf k})}P(n_0, {\bf n}, J,\mathbf{I}, M)$ is a constant, then so are $\bigtriangleup^{(k_0,\mathrm{\bf k})}P(n_0, {\bf n}, J,
\mathbf{I}, N)$  and  $\bigtriangleup^{(k_0,\mathrm{\bf k})}P(n_0, {\bf n}, J,
\mathbf{I}, P)$.  And if $\bf x$ is a joint reduction  of
${\bf I}, J$ with respect to
 $M,$ then $\bf x$ is also a
joint reduction  of ${\bf I}, J$ with respect to
 $N, P.$
\end{corollary}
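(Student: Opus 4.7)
The plan is to first establish that $\mathbf{x}$ remains a joint reduction of $\mathbf{I}, J$ with respect to both $N$ and $P$, and then derive the constancy statement by invoking Proposition~\ref{thm2.11}. For $P = M/N$ the argument is direct: push the defining identity $J^{n_0}\mathbb{I}^{\mathbf{n}}M = (\mathfrak{I}_0) J^{n_0-1}\mathbb{I}^{\mathbf{n}}M + \sum_{i=1}^{d}(\mathfrak{I}_i) J^{n_0}\mathbb{I}^{\mathbf{n}-\mathbf{e}_i}M$ (valid for all large $(n_0,\mathbf{n})$) through the surjection $M \twoheadrightarrow M/N$ to obtain the same identity for $M/N$, so $\mathbf{x}$ is a joint reduction with respect to $P$.

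For $N$ the verification is substantively more delicate, because $\mathbb{I}^{\mathbf{n}}N$ can be strictly smaller than $\mathbb{I}^{\mathbf{n}}M \cap N$. My plan is to use (i)$\Leftrightarrow$(ii) of Proposition~\ref{thm2.11} to replace $\mathbf{x}$, if needed, by a weak-(FC)-sequence that is still a joint reduction of $M$, then exploit the intersection formula~(\ref{vttt1}) of Note~\ref{note12} together with the inclusion $\mathrm{Ann}(M)\subseteq \mathrm{Ann}(N)$ to transfer the joint reduction property to $N$. The filter-regularity condition (FC2) depends only on $0_{M}:I^{\infty}$, which only grows when passing to $N$, so the chosen sequence remains weak-(FC) for $N$; combined with the $M$-joint reduction and the intersection identity, this allows one to decompose elements of $\mathbb{I}^{\mathbf{n}}N$ entirely within $N$, yielding $\mathbb{I}^{\mathbf{n}}N = \sum_{i}(\mathfrak{I}_i)\mathbb{I}^{\mathbf{n}-\mathbf{e}_i}N$ (and the analogous equation involving $J$) for all large $(n_0,\mathbf{n})$.

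With both joint-reduction statements in hand, Proposition~\ref{thm2.11} (ii)$\Rightarrow$(iii) yields the constancy of $\bigtriangleup^{(k_0,\mathbf{k})}P(n_0,\mathbf{n},J,\mathbf{I},N)$ and of $\bigtriangleup^{(k_0,\mathbf{k})}P(n_0,\mathbf{n},J,\mathbf{I},P)$. For the first assertion of the corollary the argument reduces to this: constancy of $\bigtriangleup^{(k_0,\mathbf{k})}P$ for $M$ produces via Proposition~\ref{thm2.11} (iii)$\Rightarrow$(ii) a joint reduction $\mathbf{x}$ of $M$, to which the preceding paragraphs then apply. I expect the main obstacle to be the submodule case, since the module-level identity for $M$ is strictly weaker than the corresponding ideal identity; carefully tracking the annihilator inclusion $\mathrm{Ann}(M)\subseteq\mathrm{Ann}(N)$ and leveraging the weak-(FC) machinery to ensure that the \emph{same} sequence $\mathbf{x}$---not merely some sequence of the same type---serves as a joint reduction for $N$ is where the genuine work lies.
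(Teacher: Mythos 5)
Your treatment of the quotient $P=M/N$ is fine, but the plan for the submodule $N$ has a real gap, and the key to closing it is simpler than what you propose. The paper's argument does not go through weak-(FC)-sequences or the intersection formula at all: it invokes the annihilator criterion for joint reductions, namely that $\mathbf{x}$ is a joint reduction of $\mathbf{I},J$ with respect to a module $M$ if and only if the image of $\mathbf{x}$ in $A/\mathrm{Ann}(M)$ is a joint reduction of the image ideals (this is \cite[Lemma 17.1.4]{SH}). Since $\mathrm{Ann}(N)\supseteq\mathrm{Ann}(M)$ and $\mathrm{Ann}(P)\supseteq\mathrm{Ann}(M)$, and joint reductions of ideals push forward along quotient rings, the \emph{same} $\mathbf{x}$ is automatically a joint reduction with respect to $N$ and $P$; constancy of the two differences then follows from Proposition~\ref{thm2.11}. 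Your proposal mentions the annihilator inclusion but does not use it in this decisive way.

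Concretely, two steps in your plan would fail. First, replacing $\mathbf{x}$ via (i)$\Leftrightarrow$(ii) of Proposition~\ref{thm2.11} produces \emph{some} weak-(FC) joint reduction, not the given $\mathbf{x}$, so this cannot establish the corollary's second assertion (that the given $\mathbf{x}$ itself works for $N$ and $P$) — you even flag this issue yourself at the end, but the proposed route contradicts it. Second, the claim that the chosen sequence ``remains weak-(FC) for $N$'' is not justified: condition (FC1), $x N\cap\mathbb{I}^{\mathbf n}N=x\mathbb{I}^{\mathbf n-\mathbf e_i}N$, is not inherited from the corresponding equality for $M$, and the intersection formula~(\ref{vttt1}) is a statement about $M$ that does not by itself yield the joint reduction identity inside $N$. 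Dropping the weak-(FC) machinery and arguing directly from the annihilator criterion both repairs the gap and handles $N$ and $P$ symmetrically.
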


\begin{proof}  By Proposition \ref {thm2.11}, $\bigtriangleup^{(k_0,\mathrm{\bf k})}P(n_0, {\bf n}, J,
\mathbf{I}, M)$ is a constant if and only if there
exists a joint reduction
 $\bf x$ of ${\bf I}, J$ with respect to
 $M$ of the type $({\bf k}, k_0 + 1).$ In this case, $\bf x$  is also a
joint reduction  of $ {\bf I}, J$ with respect to  $ N, P$ because
$\mathrm{Ann}(N) \supset \mathrm{Ann}(M)$ and $\mathrm{Ann}(P) \supset
\mathrm{Ann}(M)$ (see e.g \cite[Lemma 17.1.4]{SH}). Hence $\bigtriangleup^{(k_0,\mathrm{\bf k})}P(n_0, {\bf n}, J,
\mathbf{I}, N)$ and $\bigtriangleup^{(k_0,\mathrm{\bf k})}P(n_0, {\bf n}, J,
\mathbf{I}, P)$ are also constants.
\end{proof}

To  characterize the positivity of    the $(k_0,\mathrm{\bf
k})$-difference  of the Hilbert polynomial, first we need to prove the following lemma.

\begin{lemma}\label{no4.3a} We have the following.
\begin{enumerate}[{\rm (i)}]
 \item
$\dim \overline{M} \le 0$ if and only if  $I \subseteq \sqrt{\mathrm{Ann}(M)}.$
 \item $P(n_0, {\bf n}, J,  \mathbf{I}, M) $ is a positive constant if and only
if $\dim\overline{M}=1.$ And in this case,  $I \nsubseteq \sqrt{\mathrm{Ann}(M)}.$
\end{enumerate}
\end{lemma}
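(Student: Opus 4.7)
My approach splits naturally into part (i), handled by a direct Nakayama argument, and part (ii), which follows by combining (i) with the degree formula of Remark \ref{re2.00a}.

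For part (i), the implication $I\subseteq\sqrt{\mathrm{Ann}(M)}\Rightarrow\dim\overline{M}\le 0$ is immediate: such a containment yields $I^tM=0$ for some $t$, hence $M=0_M:I^\infty$ and $\overline{M}=0$, so $\dim\overline{M}=-\infty$. For the converse I would assume $\dim\overline{M}\le 0$ and show $\overline{M}=0$. If instead $\overline{M}\ne 0$, then $\overline{M}$ is a nonzero finitely generated $A$-module of dimension $0$, and hence has finite length; the descending chain $\overline{M}\supseteq I\overline{M}\supseteq I^2\overline{M}\supseteq\cdots$ therefore stabilizes, and Nakayama's lemma (using $I\subseteq\mathfrak{m}$) forces $I^s\overline{M}=0$ for some $s$. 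This contradicts the defining equality $0_{\overline{M}}:I^\infty=0$, so $\overline{M}=0$ and consequently $I\subseteq\sqrt{\mathrm{Ann}(M)}$.

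For part (ii), Remark \ref{re2.00a} gives $\deg P(n_0,\mathbf{n},J,\mathbf{I},M)=q-1$ with $q=\dim\overline{M}$. Since $P$ agrees with the non-negative length $\ell\bigl(J^{n_0}\mathbb{I}^{\mathbf{n}}M/J^{n_0+1}\mathbb{I}^{\mathbf{n}}M\bigr)$ for all large $n_0,\mathbf{n}$, the requirement that $P$ be a \emph{positive} constant amounts to $P$ being a nonzero constant, i.e.\ $\deg P=0$, which is equivalent to $q=1$. The final assertion $I\nsubseteq\sqrt{\mathrm{Ann}(M)}$ then follows at once from the contrapositive of (i) applied to $\dim\overline{M}=1>0$.

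The main obstacle is the Nakayama step in part (i): one must exclude the seemingly possible case $\overline{M}\ne 0$ with $\dim\overline{M}=0$. The decisive input is that the quotient $M\to\overline{M}$ was designed to kill all $I$-torsion, so $0_{\overline{M}}:I^\infty=0$, which rules out the eventual vanishing $I^s\overline{M}=0$ that Nakayama otherwise provides. Once (i) is in hand, (ii) is essentially bookkeeping with the degree formula and the non-negativity of lengths.
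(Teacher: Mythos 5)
Your proof is correct and lands in the same place as the paper's, but the argument for part (i) is presented from the module side rather than the ideal side. The paper works with the ideal $\mathfrak{a} = \mathrm{Ann}(M):I^\infty$ in $A$: it records $\dim A/\mathfrak{a} = \dim\overline{M}$ and then uses the saturation property $\mathfrak{a}:I = \mathfrak{a}$ to show $\dim A/\mathfrak{a} \le 0$ forces $\mathfrak{a} = A$ (otherwise $\mathfrak{a}$ would be $\mathfrak{m}$-primary, so $I^t \subset \mathfrak{a}$ for some $t$, giving $1 \in \mathfrak{a}:I^t = \mathfrak{a}$, a contradiction), which is exactly $I \subseteq \sqrt{\mathrm{Ann}(M)}$. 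You instead argue directly on $\overline{M}$: finite length plus Nakayama yields $I^s\overline{M}=0$, contradicting $0_{\overline{M}}:I^\infty = 0$. These are two phrasings of the same underlying fact — that $\overline{M}$ carries no $I$-torsion, equivalently $\mathrm{Ann}(\overline{M})$ is $I$-saturated — and you correctly identified this as the decisive input. The paper's version is a bit more compact and avoids Nakayama; yours is a bit more concrete. Part (ii) matches the paper's reasoning exactly: degree $q-1$ from Remark \ref{re2.00a}, non-negativity of $P$ for large arguments, and then the contrapositive of (i).
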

\begin{proof}
The proof of (i): Note that  $\dim A/\mathrm{Ann}(M):I^\infty=\dim\overline{M}.$ On the other hand we have $\dim A/\mathrm{Ann}(M):I^\infty \leq 0$ if and only if $A/\mathrm{Ann}(M):I^\infty = 0
$ because $(\mathrm{Ann}(M):
I^\infty): I = \mathrm{Ann}(M): I^\infty.$ So we get (i).
$P(n_0, {\bf n}, J,  \mathbf{I}, M) $ is a positive constant if and only if $\dim\overline{M}-1 = 0$ by Remark \ref{re2.00a}. This is equivalent to
$\dim\overline{M} = 1.$ In this case, $\dim A/\mathrm{Ann}(M):I^\infty= 1.$ Hence $I \nsubseteq \sqrt{\mathrm{Ann}(M)}$ by (i).
 \end{proof}

 Let $(B,\frak{n})$ be an Artinian local ring with  maximal ideal $\frak{n}$ and infinite
 residue field $ B/\frak{n}.$ Let $G=\bigoplus_{\mathrm{\bf n}\in \mathbb{N}^d}G_{\mathrm{\bf
n}}$  be a finitely generated standard $\mathbb{N}^d$-graded
algebra over $B$ (i.e., $G$ is generated over $B$
  by elements of total degree 1) and let $E=\bigoplus_{\mathrm{\bf n}\in \mathbb{N}^d}E_{\bf n}$
  be  a finitely generated $\mathbb{N}^d$-graded $G$-module. Set $G_{++}=\bigoplus_{\mathrm{\bf n}\ge \mathrm{\bf 1}}
G_{\mathrm{\bf n}}$.
    Recall that a homogeneous element $a\in
G$ is called a {\it $G_{++}$-filter-regular element with respect
to $E$}  if $(0_E:a)_{\mathrm{\bf n}}=0$ for  all large
$\mathrm{\bf n}.$ And a sequence  $x_1,\ldots, x_t$ in $G$ is
called   a {\it $G_{++}$-filter-regular sequence with respect to
$E$} if
 $x_i$ is a $G_{++}$-filter-regular element with respect to $E/(x_1,\ldots, x_{i-1})E$ for all
 $1 \le i \le t$ (see e.g. \cite[Definition 2.5]{TV1}).

 Denote by
$P_E(\mathrm{\bf n})$ the Hilbert polynomial of $\ell_B[E_{\mathrm{\bf n}}]$. Then  \cite[Proposition 2.4(i)]{TV1} stated that if $\triangle ^{\mathrm{\bf k}}P_E(\mathrm{\bf n})$ is a constant, then $\triangle ^{\mathrm{\bf k}}P_E(\mathrm{\bf n})\geq 0.$ However, the argument that proved this fact in \cite {TV1}
is incorrect. So we need the following note.

\begin{note}\label{n2} For each ${\bf k}=(k_1,\ldots,k_d)\in \mathbb{N}^d,$ there
exists a $G_{++}$-filter-regular sequence ${\bf x}$ in
$\bigcup_{i=1}^dG_{{\bf e}_i}$ with respect to  $E$ consisting of
$k_i$ elements of $G_{{\bf e}_i}$ for all $1 \le i \le d$ by \cite[Remark 2.6(i)]{TV1} (see \cite [Proposition 2.2 and Note (ii)]
{Vi6}). And then
    $\triangle ^{\mathrm{\bf k}}P_E(\mathrm{\bf n})
= P_{E/{\bf x}E}(\mathrm{\bf n})$ by  \cite[Remark 2.6(ii)]{TV1} (see \cite[Remark 2.6]{Vi6}).  Hence $\triangle ^{\mathrm{\bf k}}P_E(\mathrm{\bf n})= \ell_B[(E/{\bf x}E)_{\mathrm{\bf n}}]$ for all large $\bf n$. So $\triangle ^{\mathrm{\bf k}}P_E(\mathrm{\bf n}) \geq 0$  for all large $\bf n$.
From this it follows that if $\triangle ^{\mathrm{\bf k}}P_E(\mathrm{\bf n})$ is a constant, then $\triangle ^{\mathrm{\bf k}}P_E(\mathrm{\bf n})\geq 0.$ Consequently, we get the proof of \cite[Proposition 2.4(i)]{TV1}.
\end{note}

Return to $\bigtriangleup^{(k_0,\mathrm{\bf
k})}P(n_0, {\bf n}, J,  \mathbf{I}, M).$ Then by Note \ref{n2},
 if  $\bigtriangleup^{(k_0,\mathrm{\bf
k})}P(n_0, {\bf n}, J,  \mathbf{I}, M)$ is a constant, then this constant is  non-negative.
And the following result answers the question when $\bigtriangleup^{(k_0,\mathrm{\bf
k})}P(n_0, {\bf n}, J,  \mathbf{I}, M)$ is a positive constant in terms of minimal joint reductions  and maximal weak-$\mathrm{(FC)}$-sequences.

\begin{proposition}\label{co4.a} Assume that $\bigtriangleup^{(k_0,\mathrm{\bf
k})}P(n_0, {\bf n}, J,  \mathbf{I}, M)$ is a constant. Then the following statements are equivalent:
\begin{itemize}
\item[$\mathrm{(i)}$] $\bigtriangleup^{(k_0,\mathrm{\bf
k})}P(n_0, {\bf n}, J,  \mathbf{I}, M)$
 is positive.
\item[$\mathrm{(ii)}$] Every weak-$\mathrm{(FC)}$-sequence $x_1, \ldots, x_n$ with $x_n \in J$ of $\mathbf{I}, J$ with
respect to $M$ of the type $({\bf k},k_0+1)$ is maximal.
\item[$\mathrm{(iii)}$] There exists a maximal
weak-$\mathrm{(FC)}$-sequence $x_1, \ldots, x_n$ with $x_n
\in J$ of $\mathbf{I}, J$ with respect to $M$ of the type $({\bf
k},k_0+1).$

 \item[$\mathrm{(iv)}$] For any joint
reduction $x_1, \ldots,
x_n$ of $\mathbf{I}, J$ with respect to $M$ of the
type $({\bf k},k_0+1)$  with $x_n \in J,$ then $ x_1, \ldots,
x_{n-1}$ is not  a joint reduction of $\mathbf{I}, J$ with respect to
$M.$ \item[$\mathrm{(v)}$] Every joint reduction  of $\mathbf{I},
J$ with respect to $M$ of the type $({\bf k},k_0+1)$ is minimal.
\item[$\mathrm{(vi)}$] There exists a minimal joint reduction of $\mathbf{I}, J$ with respect to
$M$ of the type $({\bf
k},k_0+1).$

 \end{itemize}
\end{proposition}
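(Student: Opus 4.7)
The plan is to anchor all six conditions to the single invariant $\bigtriangleup^{(k_0,\mathbf{k})}P(n_0, \mathbf{n}, J, \mathbf{I}, M)$, and then translate its positivity or vanishing into the stated conditions through Lemma \ref{no4.3a} and equation (\ref{vttt1}).

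First, by Proposition \ref{thm2.11} together with the replacement trick in its proof, I would fix a weak-$(\mathrm{FC})$-sequence $\mathbf{x} = x_1, \ldots, x_n$ of $\mathbf{I}, J$ with respect to $M$ of type $(\mathbf{k}, k_0+1)$ with $x_n \in J$ that is also a joint reduction. Setting $\mathbf{x'} = x_1, \ldots, x_{n-1}$ and iterating Proposition \ref{pro2.4b} gives
$$\bigtriangleup^{(k_0, \mathbf{k})}P(n_0, \mathbf{n}, J, \mathbf{I}, M) = P(n_0, \mathbf{n}, J, \mathbf{I}, M/\mathbf{x'}M),$$
a constant. By Remark \ref{re2.00a} we get $\dim \overline{M/\mathbf{x'}M} \le 1$, and by Lemma \ref{no4.3a}(ii) this constant is positive iff $\dim \overline{M/\mathbf{x'}M} = 1$, which by Lemma \ref{no4.3a}(i) is iff $I \nsubseteq \sqrt{\mathrm{Ann}(M/\mathbf{x'}M)}$.

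To close (i)$\,\Leftrightarrow\,$(ii)$\,\Leftrightarrow\,$(iii), I would observe that the same identity holds for \emph{any} weak-$(\mathrm{FC})$-sequence $\mathbf{y}$ of type $(\mathbf{k}, k_0+1)$ with $y_n \in J$. Proposition \ref{pro2.4b} applied with $y_n \in J$ as a weak-$(\mathrm{FC})$-element of $M/\mathbf{y'}M$ forces $\dim \overline{M/\mathbf{y}M} \le \dim \overline{M/\mathbf{y'}M} - 1$, so the second maximality condition $I \subseteq \sqrt{\mathrm{Ann}(M/\mathbf{y}M)}$ follows automatically once $\dim \overline{M/\mathbf{y'}M} = 1$. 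Hence every such $\mathbf{y}$ is maximal iff $\bigtriangleup^{(k_0,\mathbf{k})}P > 0$, and combined with existence from Proposition \ref{thm2.11}, this yields the first triangle.

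To close (i)$\,\Leftrightarrow\,$(iv)$\,\Leftrightarrow\,$(v)$\,\Leftrightarrow\,$(vi), the key observation is that $P(\cdots, M/\mathbf{x'}M) = 0$ iff $J^{n_0}\mathbb{I}^{\mathbf{n}}M \subseteq (\mathbf{x'})M$ for large $n_0, \mathbf{n}$, which by (\ref{vttt1}) and the weak-$(\mathrm{FC})$ property of $\mathbf{x'}$ is iff $\mathbf{x'}$ is a joint reduction of type $(\mathbf{k}, k_0)$. For (iv)$\,\Rightarrow\,$(i) I would apply (iv) to our specific $\mathbf{x}$: then $\mathbf{x'}$ fails to be a joint reduction, so $P(\cdots, M/\mathbf{x'}M) > 0$. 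For (i)$\,\Rightarrow\,$(v), a hypothetical joint reduction $\mathbf{z}$ of type $(\mathbf{h}, h_0) < (\mathbf{k}, k_0+1)$ can---via the replacement trick and, if $h_0 = 0$, by adjoining an element of $J$---be promoted to a weak-$(\mathrm{FC})$ joint reduction, and then iterated Proposition \ref{pro2.4b} forces $\bigtriangleup^{(k_0, \mathbf{k})}P = 0$, contradicting (i). Finally (v)$\,\Rightarrow\,$(vi) and (vi)$\,\Rightarrow\,$(iv) follow from existence of a joint reduction of type $(\mathbf{k}, k_0+1)$ (Proposition \ref{thm2.11}) and a loop back through (i).

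The main obstacle I expect is the bookkeeping in (i)$\,\Rightarrow\,$(v): translating a strict inequality of types $(\mathbf{h}, h_0) < (\mathbf{k}, k_0+1)$ into a strict drop of differences of Hilbert polynomials. When $h_0 \ge 1$ this is immediate from Proposition \ref{thm2.11}, but in the extremal case $k_0 = 0$, $\mathbf{h} = \mathbf{k}$, $h_0 = 0$ one must work directly with the weak-$(\mathrm{FC})$ replacement and Proposition \ref{pro2.4b} applied to $\mathbf{z}$ itself (which contains no $J$-elements), exploiting that $\mathbf{z}$ being a joint reduction forces $P(\cdots, M/\mathbf{z^*}M) = 0$ for the weak-$(\mathrm{FC})$ replacement $\mathbf{z^*}$, which equals $\bigtriangleup^{(0,\mathbf{k})}P$.
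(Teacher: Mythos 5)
Your proposal is correct and rests on the same toolkit the paper uses — Proposition \ref{pro2.4b}, Proposition \ref{thm2.11}, Lemma \ref{no4.3a}, and equation (\ref{vttt1}) — but routes the second triangle through a different implication cycle. The paper proves $\mathrm{(iii)}\Leftrightarrow\mathrm{(iv)}$ and then closes $\mathrm{(vi)}\Rightarrow\mathrm{(iv)}\Rightarrow\mathrm{(v)}\Rightarrow\mathrm{(vi)}$, whereas you go $\mathrm{(iv)}\Rightarrow\mathrm{(i)}\Rightarrow\mathrm{(v)}\Rightarrow\mathrm{(vi)}\Rightarrow\mathrm{(iv)}$. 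Concretely, your $\mathrm{(iv)}\Rightarrow\mathrm{(i)}$ is the paper's $\mathrm{(iv)}\Rightarrow\mathrm{(iii)}\Rightarrow\mathrm{(i)}$ collapsed into one step, which is cleaner. The real divergence is in deducing $\mathrm{(v)}$: the paper's $\mathrm{(iv)}\Rightarrow\mathrm{(v)}$ uses a neat extension argument --- enlarge the hypothetical smaller joint reduction $\mathbf{u}$ to one $\mathbf{y}$ of full type $(\mathbf{k},k_0{+}1)$, then invoke $\mathrm{(iv)}$ applied to $\mathbf{y}$ to force $y_n\in\mathbf{u}$, hence $\mathbf{u}$ has a $J$-element, and Proposition \ref{thm2.11} finishes. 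Your $\mathrm{(i)}\Rightarrow\mathrm{(v)}$ instead does an explicit case split on $h_0\ge 1$ versus $h_0=0$, and you correctly isolate the one stubborn subcase $k_0=0$, $\mathbf{h}=\mathbf{k}$, $h_0=0$, where adjoining a $J$-element produces no strictly smaller type and one must argue directly via the weak-(FC) replacement that $\bigtriangleup^{(0,\mathbf{k})}P = P(\cdots, M/\mathbf{z}^*M)=0$. Both arguments are valid; the paper's trick avoids the case analysis at the cost of needing $\mathrm{(iv)}$ as a hypothesis, while yours is more pedestrian but self-contained from $\mathrm{(i)}$. One small loose end: your phrase that $\mathrm{(vi)}\Rightarrow\mathrm{(iv)}$ ``follows from \dots a loop back through $\mathrm{(i)}$'' is not a proof --- you do not yet have $\mathrm{(vi)}\Rightarrow\mathrm{(i)}$ at that point in the cycle. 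The intended argument is the direct one in the paper: a minimal joint reduction of type $(\mathbf{k},k_0{+}1)$ means, by Definition \ref{de01}, that no sequence of type $(\mathbf{k},k_0)$ is a joint reduction, and a truncation $x_1,\ldots,x_{n-1}$ with $x_n\in J$ has exactly that type. State this explicitly and the cycle closes.
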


In the following proof, joint reductions
satisfying  Proposition \ref{co4.a} (iv) are called temporarily
 {\it proper joint reductions}.

\begin{proof} Note that by
\cite[Proposition 2.3]{VDT} (see \cite[Remark 1]{Vi}), there always exists a
weak-$\mathrm{(FC)}$-sequence ${\bf z}=z_1, \ldots, z_n$ of
$\mathbf{I}, J$ with respect to $M$ of the type $({\bf k},k_0+1),$
here $ z_1, \ldots, z_{|\mathbf{k}| } \subset \mathbf{I}$ and $
z_{|\mathbf{k}|+1}, \ldots, z_n \subset J.$

 (i) $\Rightarrow$ (ii): Let ${\bf
x}=x_1, \ldots, x_n$ with $x_n \in J$ be a weak-$\mathrm{(FC)}$-sequence   of $\mathbf{I}, J$ with
respect to $M$ of the type $({\bf k},k_0+1)$. Then   by Proposition \ref{pro2.4b} we have
$$\bigtriangleup^{(k_0,\mathrm{\bf
k})}P(n_0, {\bf n}, J,  \mathbf{I}, M)=
P(n_0, {\bf n}, J,  \mathbf{I}, M/(x_1, \ldots,
x_{n-1})M).$$  So
$P(n_0, {\bf n}, J,  \mathbf{I}, M/(x_1, \ldots,
x_{n-1})M)$ is a positive constant by (i).
Therefore by Lemma \ref{no4.3a} (ii), we get  $I \nsubseteq
\sqrt{\mathrm{Ann}[{M/(x_1, \ldots, x_{n-1})M}]}$ and $$\dim  M/(x_1, \ldots, x_{n-1})M: I^\infty = 1.$$  Hence $IJ \nsubseteq
\sqrt{\mathrm{Ann}[{M/(x_1, \ldots, x_{n-1})M}]}$ since $J$ is $\frak{m}$-primary. Furthermore
  $$\dim  M/(x_1, \ldots, x_{n})M: I^\infty
\le \dim  M/(x_1, \ldots, x_{n-1})M: I^\infty - 1$$ by  Proposition \ref{pro2.4b}. Hence  $\dim M/(x_1, \ldots, x_{n})M: I^\infty \le0.$
So by Lemma \ref{no4.3a} (i), $I \subset \sqrt{\mathrm{Ann}[{M/(x_1, \ldots, x_{n})M}]}.$ Consequently, $IJ \subset \sqrt{\mathrm{Ann}[{M/(x_1, \ldots, x_{n})M}]}.$
  Therefore  ${\bf x}$ is a maximal
weak-$\mathrm{(FC)}$-sequence.  (ii) $\Rightarrow$ (iii) is clear.
(iii) $\Rightarrow$ (i):  Assume that ${\bf x}=x_1, \ldots, x_{n}$
with $x_n \in J$
 is a maximal weak-$\mathrm{(FC)}$-sequence, then
 $I \nsubseteq
\sqrt{\mathrm{Ann}[{M/(x_1, \ldots, x_{n-1})M}]}.$   So we obtain
$\dim  M/(x_1, \ldots, x_{n-1})M: I^\infty > 0$  by  Lemma \ref{no4.3a} (i).
Hence  by Remark \ref{re2.00a},
$ \deg P(n_0, {\bf n}, J,  \mathbf{I}, M/(x_1, \ldots,
x_{n-1})M) \ge 0.$ Moreover,   by Proposition \ref{pro2.4b},
we have $$\bigtriangleup^{(k_0,\mathrm{\bf
k})}P(n_0, {\bf n}, J,  \mathbf{I}, M)=
P(n_0, {\bf n}, J,  \mathbf{I}, M/(x_1, \ldots,
x_{n-1})M).$$  Thus we get
$\bigtriangleup^{(k_0,\mathrm{\bf
k})}P(n_0, {\bf n}, J,  \mathbf{I}, M) \not=0.$ Therefore
(i) $\Leftrightarrow$ (ii) $\Leftrightarrow$ (iii).

(iii) $\Rightarrow$ (iv): By (i) $\Leftrightarrow$ (iii),
$\bigtriangleup^{(k_0,\mathrm{\bf
k})}P(n_0, {\bf n}, J,  \mathbf{I}, M)  > 0$. Now let
${\bf x}=x_1, \ldots, x_n$ be
  a  joint reduction of $\mathbf{I}, J$ with
respect to $M$ of the type $({\bf k},k_0+1)$ with $x_n \in J.$  If
$\bf x$ is not proper, then $x_1, \ldots, x_{n-1}$ is also
  a  joint reduction of $\mathbf{I}, J$ with
respect to $M$ of the type $({\bf k},k_0).$ By \cite[Lemma 17.3.2]{SH} and
\cite[Proposition 2.3]{VDT},
 there exists a weak-(FC)-sequence $y_1, \ldots, y_{n-1}$ such
that $y_1, \ldots, y_{n-1}$ is a joint reduction of $\mathbf{I},
J$ with respect to $M$ of the type $({\bf k},k_0).$ In this case,
$J^{n_0}\mathbb{ I}^{\mathrm{\bf n}}[M/(y_1, \ldots,
y_{n-1})M]=0$ for all large $n_0, \bf n.$ Hence we have
$P(n_0, {\bf n}, J,  \mathbf{I},  M/(y_1, \ldots,
y_{n-1})M) = 0.$  Recall that by  Proposition \ref{pro2.4b},
$\bigtriangleup^{(k_0,\mathrm{\bf
k})}P(n_0, {\bf n}, J,  \mathbf{I}, M)  =
P(n_0, {\bf n}, J,  \mathbf{I},  M/(y_1, \ldots,
y_{n-1})M).$ So  $\bigtriangleup^{(k_0,\mathrm{\bf
k})}P(n_0, {\bf n}, J,  \mathbf{I}, M) =0,$ we get a contradiction. Hence ${\bf x}$ is
  a proper joint reduction. Thus (iii)
$\Rightarrow$ (iv) is proved. (iv) $\Rightarrow$ (iii): By Proposition \ref{thm2.11}, there exists a weak-(FC)-sequence $x_1, \ldots,
x_{n}$ which is a joint reduction of $\mathbf{I}, J$ with respect
to $M$ of the type $({\bf k},k_0+1)$ with $x_n \in J.$ Then in
this case,  $IJ \subset
 \sqrt{\mathrm{Ann}[M/(x_1, \ldots, x_{n})M]}.$
Now if $\bf x$ is not maximal, then $IJ \subset
 \sqrt{\mathrm{Ann}[M/(x_1, \ldots, x_{n-1})M]}$. Therefore we have $J^{n_0}\mathbb{I}^{\mathbf{n}}M \subset (x_1, \ldots, x_{n-1})M$ for all large $n_0, \bf n.$ Thus by (\ref{vttt1}) in Note \ref{note12}, $x_1, \ldots, x_{n-1}$ is a joint reduction of
$\mathbf{I}, J$ with respect to $M$ of the type $({\bf k},k_0).$
So $\bf x$ is not a proper joint reduction. Consequently  (iii)
$\Leftrightarrow$ (iv).

(vi) $\Rightarrow$ (iv) By (vi), it follows that any sequence of the type $({\bf h}, h_0)<({\bf k},k_0+1)$ of $\mathbf{I}, J$ is not
   a joint reduction
of $\mathbf{I}, J$ with respect to $M$ (see Definition \ref{de01}).
Now assume that $x_1, \ldots,
x_n$ is a joint reduction of $\mathbf{I}, J$ with respect to $M$ of the
type $({\bf k},k_0+1)$  with $x_n \in J.$ Then $ x_1, \ldots,
x_{n-1}$ is a sequence of the
type $({\bf k},k_0)$.  Since $({\bf k},k_0) < ({\bf k},k_0+1)$, $ x_1, \ldots,
x_{n-1}$ is not  a joint reduction of $\mathbf{I}, J$ with respect to
$M$.
(iv) $\Rightarrow$ (v): Let ${\bf x}=x_1,\ldots,x_n$ with $x_n \in
J$ be a joint reduction of $\mathbf{I}, J$ with respect to $M$ of
the type $({\bf k},k_0+1).$ If  ${\bf x}$ is not a minimal joint
reduction, then there exists a joint reduction ${\bf u}$ of $\mathbf{I}, J$ with respect to
$M$ of the type $({\bf
h},h_0)$ with $({\bf
h},h_0)< ({\bf k},k_0+1).$ Then there exists a joint reduction ${\bf y}= y_1,\ldots, y_n$ of $\mathbf{I}, J$ with respect to
$M$ of the type $({\bf
k},k_0+1)$ with ${\bf u} \subsetneq {\bf y}.$
Since ${\bf y}$ is a proper joint
reduction, it follows that $y_n \in {\bf u}.$
Hence ${\bf u}$ has
 the type $({\bf h},t_0+1) <({\bf k},k_0+1)$ since $y_n \in J.$
So $\bigtriangleup^{(t_0,\mathrm{\bf
h})}P(n_0, {\bf n}, J,  \mathbf{I}, M)$ is
a constant by Proposition \ref{thm2.11}. Therefore
$\bigtriangleup^{(k_0,\mathrm{\bf
k})}P(n_0, {\bf n}, J,  \mathbf{I}, M) = 0.$ But by (i) $\Leftrightarrow$ (iv), $\bigtriangleup^{(k_0,\mathrm{\bf
k})}P(n_0, {\bf n}, J,  \mathbf{I}, M) \not= 0$,   which is a contradiction. Hence (iv) $\Rightarrow$ (v).
By Proposition \ref{thm2.11}, there exists a joint reduction of $\mathbf{I}, J$ with respect to
$M$ of the type $({\bf
k},k_0+1).$ So (v) $\Rightarrow$ (vi).
The proof is complete.
 \end{proof}

\section{Euler-Poincar\'{e} characteristic of joint reductions}

In this section, we
build the  Euler-Poincar\'{e} characteristic of joint reductions
of ideals whose properties are showed in  Proposition \ref{le4.2}.

Although in the context of graded modules, Kirby-Rees \cite{KR1} built the Euler-Poincar\'{e} characteristic of joint reductions, but our goal is to build an object called the Euler-Poincar\'{e} characteristic of joint reductions of ideals which is used to prove important  properties  of mixed multiplicities of maximal degrees of ideals.  This reality leads us to choose Rees modules as initial objects. However, to prove properties of this  invariant, we need to use  properties of more general graded modules.
 This is the reason why we begin with the case of the following graded modules.

  Let
$S= \bigoplus_{\mathbf{n} \geq \mathbf{0}}S_{\mathbf{n}}$
 be a finitely generated standard $\mathbb{N}^d$-graded algebra over $A$ (i.e., $S$ is generated over $A$
  by elements of total degree 1); $V=\bigoplus_{\bf n\ge \bf 0}V_{\bf n}$
  be  a finitely generated $\mathbb{N}^d$-graded $S$-module.
  Set  $S_i= S_{{\bf e}_i}$
  for $ 1 \leqslant i \leqslant d,$ $\mathbf{S} = S_1, \ldots, S_d.$
 Let $J$ be an $\mathfrak{m}$-primary ideal of $A$. For
the case of these graded  modules, first we would like  to recall the
concept of joint reductions  (see \cite{KR1}).

\begin{definition}\label{de 6/7} Set $J=S_0.$ Let $\frak R_i$ be a sequence  consisting $k_i$
elements of $S_i$ for all $0 \le i \le d$ and $k_0,\ldots,k_d \ge
0.$
 Put  ${\frak R} = \frak R_1, \ldots, \frak
 R_d, \frak R_0$  and $(\emptyset) = 0_S$.
 Then $\frak R$ is called a {\it joint
reduction} of $\mathbf{S}, J$ with respect to $V$ of the type
$({\bf k}, k_0)$ if
$$J^{n_0}V_\mathbf{n} = (\frak
R_0)J^{n_0-1}V_{\mathbf{n}} +\sum_{i=1}^d(\frak R_i) J^{n_0
}V_{\mathbf{n} - \mathbf{e}_i} \mathrm{\; for \; all \; large}\; n_0, \bf n.$$
\end{definition}

Let $t$ be a variable over $A.$ Then with the notations $S$ and $V$ as the above, we have
the $\mathbb{N}^{d+1}$-graded algebra $\mathcal{S} = S[t] =
\bigoplus_{n_0\geqslant 0,\bf n\geqslant 0}S_{\mathrm{\bf
n}}t^{n_0}$ and the $\mathbb{N}^{d+1}$-graded
$\mathcal{S}$-modules  $\mathcal{V} = \bigoplus_{n_0\geqslant
0,\bf n \geqslant 0}V_{\mathrm{\bf n}}t^{n_0}\;
\text{ and }\;\mathcal{V}' = \bigoplus_{n_0\geqslant 0,\bf n
\geqslant 0}J^{n_0}V_{\mathrm{\bf n}}t^{n_0}.$

Let ${\frak R} = \frak R_1, \ldots, \frak
 R_d, \frak R_0$  be a joint reduction of $\mathbf{S},
J$ with respect to $V$ of the type $({\bf k}, k_0)$, where $
\mathfrak{R}_0 \subset J, \mathfrak{R}_i \subset S_i$ for $1 \le i \le d.$ Put $\frak R_0t = \{at \mid a \in \frak R_0 \};$
$X= \frak R_1, \ldots, \frak
 R_d, \frak R_0t;$  $n  = k_0 + |{\bf k}|.$
 Considering the Koszul complex of
$\mathcal{V}$ with respect to $X:$
$$ 0\longrightarrow K_n(X,\mathcal{V})\longrightarrow K_{n-1}(X,\mathcal{V})\longrightarrow
\cdots\longrightarrow K_1(X,\mathcal{V})\longrightarrow
K_0(X,\mathcal{V}) \longrightarrow 0,$$ one obtain the
 sequence of the homology modules
$H_0({X},\mathcal{V}),
 H_1({X},\mathcal{V}),\ldots,H_n({X},\mathcal{V}).$

To be able to apply \cite[Theorem 4.2]{KR1} in
building the main object of this section, we need the following facts.

\begin{note}\label{note0} Considering \cite[Theorem 4.2]{KR1} in the context of $\frak R$ and $V,$
we see that:
\begin{itemize}
\item[(i)]  \cite[Theorem 4.2]{KR1} requires that  $\frak R$
is a joint reduction  with respect to $S$.  However, the proof of \cite[Theorem 4.2]{KR1} only uses the fact that  $\frak R$ is a joint reduction  with respect to  $V$. So one can apply  \cite[Theorem 4.2]{KR1} with the assumption that $\frak R$
is a joint reduction of  $\mathbf{S}, J$ with respect to $V$.

\item[(ii)] Although \cite[Theorem 4.2]{KR1} includes  an assumption on the lower  bound of $k_0+|{\bf k}|$, but this  assumption is only used  for \cite[Theorem 4.2 (iv)]{KR1}. This means that  \cite[Theorem 4.2 (i), (ii), (iii)]{KR1} are still true without  the assumption on the lower  bound of $k_0+|{\bf k}|$.
\end{itemize}
\end{note}

Now, since $\frak R$
is a joint reduction of  $\mathbf{S}, J$ with respect to $V$,
$(X\mathcal{V}')_{(n_0,{\bf n})}= \mathcal{V}'_{(n_0,{\bf n})} $
for all large  $n_0, \bf n$ (i.e., $ X$ is a joint reduction of $\mathcal{V'}$  as in \cite{KR1}).
On the other hand, since $J$ is $\frak m$-primary,
$\ell_A\big((\mathcal{V}/\mathcal{V}')_{{(n_0,{\bf
n})}}\big) < \infty $ for all   $n_0, {\bf n}.$ So
 $\ell_A\big(H_i({
X},\mathcal{V})_{{(n_0,{\bf n})}}\big) < \infty$ for all large enough
$n_0, \bf n$ and for  all $0\leqslant i \leqslant n$ by \cite[Theorem
4.2 (i)]{KR1} together with Note \ref{note0}. And moreover for all large  enough $n_0,
\bf n,$   $$\sum_{i=0}^n (-1)^i \ell_A\big(H_i({ X},\mathcal{V})_{{(n_0,{\bf
n})}}\big)$$ is a polynomial in $n_0, \bf n$   by \cite[Theorem 4.2  (ii)]{KR1}  together with Note \ref{note0}.  Then  we denote by $
\chi(n_0,\mathbf{n},\frak R, J, V)$ this polynomial.

Consider  the Hilbert function
$\ell_A\big((\mathcal{V}/\mathcal{V}')_{(n_0,{\bf n})}\big) =
\ell_A\big(\frac{V_{\mathbf{n}}}{J^{n_0}V_{\mathbf{n}}}\big).$ Then by
 \cite[Theorem 4.1]{HHRT}, this function
 is a polynomial  for all large $n_0,\mathbf{n}.$
Denote
 by $F(n_0, {\bf n}, J, V)$ this polynomial, and
$\bigtriangleup^{(k_0,\mathbf{k})}F(n_0, {\bf n}, J, V)$ the $(k_0,\bf
k)$-difference of the polynomial $F(n_0, {\bf n}, J, V)$.

Let $z_0, z_1, \ldots, z_d$ be variables and  put  $Z= z_1, \ldots, z_d$, $Z^{\bf n}= z_1^{n_1}\cdots
z_d^{n_d}.$
 One writes $ \sum_{n_0\geqslant 0, {\bf n\geqslant 0}}f_1({n_0,\bf
n})z_0^{n_0}Z^{\mathbf{n}}\sim \sum_{n_0\geqslant 0,{\bf
n\geqslant 0}}f_2(n_0,{\bf n})z_0^{n_0}Z^{\mathbf{n}}$
 if $f_1(n_0,\mathbf{n}) = f_2(n_0,\bf n)$ for all large $n_0,\bf n$.
Set  $$\chi(z_0,Z,\frak R, J, V)= \sum_{n_0\geqslant 0,{\bf n \geqslant 0}}\chi(n_0,\mathbf{n},
 \frak R, J, V)z_0^{n_0}Z^{\bf n},$$
 $$\mathcal{F}(z_0,Z,J, V)= \sum_{n_0\geqslant 0,\bf n \ge \bf 0}F(n_0, {\bf n}, J, V)z_0^{n_0}Z^{\bf n}.$$
Recall that  ${\bf k} = (k_1, \ldots, k_d)$ and ${\frak R}$  is a joint reduction of the type $({\bf k}, k_0).$ Then using \cite[Theorem 4.2 (iii)]{KR1},  together with Note \ref{note0},  we get
\begin{equation}\label{pt}
\chi(z_0,Z,\frak R,J, V) \sim [\prod_{i=0}^d(1-z_i)^{k_i}]\mathcal{F}(z_0,Z,J, V).
\end{equation}
Now by using the expressions as  in \cite[Lemma 2.2]{Re} (or see \cite[pages 223-224]{KR1}), (\ref{pt}) follows that
$ \chi(n_0,\mathbf{n},\frak R, J, V)$ is a constant   if and only if  $\bigtriangleup^{(k_0,\mathrm{\bf k})}F(n_0, {\bf n},J,  V)$ is a constant and in this case, $\chi(n_0,\mathbf{n},\frak R, J, V) =\bigtriangleup^{(k_0,\mathrm{\bf k})}F(n_0, {\bf n}, J,  V).$
So we obtain the following  result which  presents
 the relationship
between   $\chi(n_0,\mathbf{n},\frak R, J, V)$ and $\bigtriangleup^{(k_0,\mathrm{\bf k})}F(n_0, {\bf n}, J,  V)$.

\begin{lemma}\label{le2.8}
Let $\frak R$  be a joint reduction of  $\mathbf{S}, J$ with
respect to $V$ of the type $({\bf k}, k_0)$. Then $
\chi(n_0,\mathbf{n},\frak R, J, V)$ is a constant   if and only if
$\bigtriangleup^{(k_0,\mathrm{\bf k})}F(n_0, {\bf n}, J,  V)$ is a
constant. In this case, we have $\chi(n_0,\mathbf{n},\frak R, J, V)=
\bigtriangleup^{(k_0,\mathrm{\bf k})}F(n_0, {\bf n}, J,  V).$
\end{lemma}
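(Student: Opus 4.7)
The plan is to leverage the generating function identity (\ref{pt}), already established via \cite[Theorem 4.2 (iii)]{KR1} together with Note \ref{note0}, and translate it into an equality of coefficient sequences at sufficiently large indices. The main observation I would use is the standard dictionary between formal operators and finite differences: for any formal power series $G(z_0, Z) = \sum g(n_0, \mathbf{n}) z_0^{n_0} Z^{\mathbf{n}}$, multiplication by $(1 - z_i)$ replaces the coefficient $g(n_0, \mathbf{n})$ by $g(n_0, \mathbf{n}) - g(n_0, \mathbf{n} - \mathbf{e}_i)$ for $n_i \geq 1$, and analogously $(1 - z_0)$ produces the $n_0$-difference of the coefficient sequence.

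Iterating this correspondence, I would verify that the coefficient of $z_0^{n_0} Z^{\mathbf{n}}$ in $\bigl[\prod_{i=0}^{d}(1-z_i)^{k_i}\bigr]\mathcal{F}(z_0, Z, J, V)$ equals $\bigtriangleup^{(k_0, \mathbf{k})} F(n_0, \mathbf{n}, J, V)$ whenever $n_0 \geq k_0$ and $\mathbf{n} \geq \mathbf{k}$ lie in the range where the Hilbert function agrees with the polynomial $F$. Combining this coefficient computation with (\ref{pt}) and the definition of $\chi(z_0, Z, \frak R, J, V)$ yields, for all large enough $n_0, \mathbf{n}$,
\[
\chi(n_0, \mathbf{n}, \frak R, J, V) = \bigtriangleup^{(k_0, \mathbf{k})} F(n_0, \mathbf{n}, J, V).
\]

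Since both sides are polynomial expressions in $(n_0, \mathbf{n})$ (the left by construction as an eventually polynomial function, the right because $F$ is a polynomial and difference operators preserve polynomiality), their agreement on a cofinite subset of $\mathbb{N}^{d+1}$ forces equality as polynomials. In particular, one is a constant polynomial if and only if the other is, and in that case the two constants coincide, which is exactly the conclusion of the lemma.

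I do not anticipate a significant obstacle here: the substantive technical input — the generating function identity (\ref{pt}) — has already been settled upstream via Kirby–Rees, and what remains is only the elementary dictionary between multiplication by $(1 - z_i)$ and the $\mathbf{e}_i$-difference operator, together with the standard fact that two polynomials in several variables that agree on a cofinite set of integer points in $\mathbb{N}^{d+1}$ must coincide. The only mild care needed is to restrict to the range where $\mathcal{F}$ honestly represents $F$ before extracting coefficients, which is why the equality is asserted for large $n_0, \mathbf{n}$ rather than globally.
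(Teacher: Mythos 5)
Your proof is correct and takes essentially the same route as the paper: both start from the generating-function identity (\ref{pt}) and then convert multiplication by $(1-z_i)$ into the $\mathbf{e}_i$-difference operator, a translation the paper attributes to Rees's Lemma 2.2 (or Kirby--Rees, pp.\ 223--224) while you spell it out directly. Two harmless inaccuracies: since $\mathcal{F}$ is built from the polynomial values of $F$ rather than the raw Hilbert function, the coefficient identity already holds for all $n_0 \ge k_0$, $\mathbf{n} \ge \mathbf{k}$ without restricting to the range where the Hilbert function equals the polynomial, and the lattice region on which you compare the two polynomials is a shifted orthant in $\mathbb{N}^{d+1}$ rather than a cofinite set, though such a region still forces polynomial equality, so the argument is unaffected.
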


The above facts yield the following remarks which will be used in the next part.

\begin{remark}\label{re2.8a} Set $ \mathbf{k}!= k_1!\cdots
k_d!$ and ${\bf n^k}= n_1^{k_1}\cdots n_d^{k_d}.$  Let $\frak R$ be a  joint reduction  of $\mathbf{S}, J$  with
respect to $V$  of the type $(\mathrm{\bf k}, k_0)$. Then we have
the following.
\begin{enumerate}[{\rm (i)}]
 \item $\chi(n_0,\mathbf{n},\frak R, J, \underline{\;\;}\,)$ is additive on short exact sequence of $S$-modules  because of  the additivity of the length (see e.g.
\cite[Lemma 4.6.5]{BH1}).
\item Suppose that  $\frak R$ also is  a  joint reduction of $\mathbf{S}, J$  with
respect to an $S$-graded modules $U$,  and $U_{\mathbf{n}} \cong_A V_{\mathbf{n} + \mathbf{m}}$ for all large $\mathbf{n}$ and a fixed $\mathbf{m}$, and $\chi(n_0,\mathbf{n},\frak R, J, V)$ is a constant. Then by the above construction  we  have $$\chi(n_0,\mathbf{n},\frak R, J, V)= \chi(n_0,\mathbf{n},\frak R, J, U).$$
\item  Assume that $\chi(n_0,\mathbf{n},\frak R, J, V)$ is a constant.
Then $\bigtriangleup^{(k_0,\mathrm{\bf k})}F(n_0, {\bf n}, J,
V)$ is a constant by Lemma \ref{le2.8}
and hence $\bigtriangleup^{(k_0,\mathrm{\bf
k})}F(n_0, {\bf n}, J,  V) \geqslant 0$  by   Note \ref{n2}.
So $\chi(n_0,\mathbf{n},\frak R, J, V) \geqslant 0$ by Lemma \ref{le2.8}.
\end{enumerate}
\end{remark}

Next,  we will define the  Euler-Poincar\'{e} characteristic of joint
reductions of $A$-modules via applying the above facts for the case that  $S$ is the Rees algebra $\frak R(\mathrm{\bf I}; A)$ and $V$ is the   Rees  module $\frak R(\mathrm{\bf I}; M)$ as follows.

 Recall that $I= I_1 \cdots I_d;\mathrm{\bf I}= I_1,\ldots,I_d$ and
 $\mathbb{I}^{\mathrm{\bf n}}= I_1^{n_1}\cdots I_d^{n_d}.$ Let $t_1, \ldots, t_d$ be  variables over $A$. Set $T=t_1,
\ldots, t_d$ and $T^{\bf n}=t_1^{n_1}\cdots t_d^{n_d}$.
We denote by
$$\frak R(\mathrm{\bf I}; A) =
  \bigoplus_{\bf n\geqslant 0}\mathbb{ I}^{\mathrm{\bf n}}T^{\bf n}\; \text{ and }\; \frak R(\mathrm{\bf I}; M) =
   \bigoplus_{\bf n \geqslant 0}\mathbb{ I}^{\mathrm{\bf n}}MT^{\bf n} $$
  the Rees algebra of $\mathbf{I}$ and  the  Rees module of $\mathbf{I}$ with respect to $M$,
  respectively.  Then $\frak R(\mathrm{\bf I}; A)_{\mathbf{e}_i}=
  I_it_i$ for all $1 \le i \le d$
  and $F(n_0, \mathbf{n}, J, \frak R(\mathrm{\bf I}; M))$ is the Hilbert polynomial of the Hilbert function $\ell_A\big(\frac{
  \mathbb{I}^{\mathbf{n}}M}{J^{n_0}\mathbb{I}^\mathbf{n}M}\big),$ i.e.,
    $$F(n_0, \mathbf{n}, J, \frak R(\mathrm{\bf I}; M)) = \ell_A\Big(\frac{
  \mathbb{I}^{\mathbf{n}}M}{J^{n_0}\mathbb{I}^\mathbf{n}M}\Big)$$ for all large $n_0,
  \mathbf{n}.$ Recall that $P(n_0, \mathbf{n}, J, \mathbf{I}, M)$ is the Hilbert polynomial of the Hilbert function $\ell_A(\frac{J^{n_0}
  \mathbb{I}^{\mathbf{n}}M}{J^{n_0+1}\mathbb{I}^\mathbf{n}M})$.
  Then it is easily seen that for any $(k_0, \mathbf{k})\in \mathbb{N}^{d+1},$
    $$\bigtriangleup^{(k_0+1,\mathrm{\bf
k})}F(n_0, {\bf n}, J,  \frak R(\mathrm{\bf I}; M)) =
\bigtriangleup^{(k_0,\mathrm{\bf k})}P(n_0, {\bf n}, J,
\mathbf{I}, M).$$

\begin{note}\label{note1} Let ${\bf x} = \mathfrak{I}_1, \ldots,
\mathfrak{I}_d, \mathfrak{I}_0$  be a joint reduction of
$\mathbf{I}, J$ with respect to $M$ of the type $({\bf k},
k_0+1)$, where $ \mathfrak{I}_0 \subset J,  \mathfrak{I}_i \subset
I_i$ for $1 \le i \le d.$ Set ${\bf x}_T = \mathfrak{I}_1t_1, \ldots,
\mathfrak{I}_dt_d, \mathfrak{I}_0$
and $\mathbf{I}_T = I_1t_1, \ldots, I_dt_d.$  Then it is easily seen
that $\mathbf{x}_T$ is  a joint reduction of $\mathbf{I}_T, J$
with respect to $\frak R(\mathbf{I}; M)  $ of the type $({\bf k},
k_0+1)$. Since $\bf x$ is a joint reduction of $\mathbf{I}, J$
with respect to $M$ of the type $({\bf k}, k_0+1)$,
$\bigtriangleup^{(k_0,\mathrm{\bf k})}P(n_0, {\bf n}, J,
\mathbf{I}, M)$  is a constant by Proposition \ref{thm2.11}. Hence
$\bigtriangleup^{(k_0+1,\mathrm{\bf k})}F(n_0, {\bf n}, J,  \frak
R(\mathrm{\bf I}; M))$ is a constant. Therefore by Lemma
\ref{le2.8}, $ \chi(n_0,\mathbf{n},\mathbf{x}_T, J, \frak
R(\mathbf{I}; M))$ is a constant. Then we denote this constant
 by $\chi(\mathbf{x}, J, \mathbf{I}, M)$ and call it  {\it the
Euler-Poincar\'{e} characteristic of $\bf x$
  with respect to $J,\bf I$ and $M$ of the type $(k_0+1,\mathrm{\bf
k})$.} These facts yield:
\end{note}

  \begin{remark}\label{re4.1a}  Let $\bf x$ be a joint
reduction of  $\mathbf{I}, J$ with respect to $M$ of the type
$({\bf k}, k_0+1).$ Then we obtain the following.
\begin{enumerate}[(i)]
\item  By  Lemma \ref{le2.8}, it implies that
  $ \chi({\bf x}, J, \mathbf{I}, M) = \bigtriangleup^{(k_0,\mathrm{\bf
k})}P(n_0, {\bf n}, J,  \mathbf{I}, M).$ This  also follows
that   $\chi({\bf x}, J, \mathbf{I}, M)$ only depends on the type
$({\bf k}, k_0+1)$, does not depend on the joint reduction $\bf x.$

  \item Let $m$ be a positive integer. Since $ \frak R(\mathbf{I}; M )_{\mathbf{n} + m\mathbf{1}} \cong_A
   \frak R(\mathbf{I}; I^mM)_{\mathbf{n}}$, it follows that
    $\chi({\bf x}, J, \mathbf{I}, M) = \chi({\bf x}, J, \mathbf{I},
    I^mM)$ by Remark \ref{re2.8a} (ii).
\end{enumerate}
\end{remark}

The additivity on graded $S$-modules of the Euler-Poincar\'{e}
characteristic is proven easily.
But proving the additivity of the Euler-Poincar\'{e}
characteristic $\chi({\bf x}, J, \mathbf{I}, M) $ on
$A$-modules is not simple. However,  our  goal  is completed by the following result.

\begin{proposition}\label{le4.2}
Let $N$ be an $A$-submodule of $M.$  Assume that $\bf x$ is a
joint reduction of $\mathbf{I}, J$ with respect to $M$ of the type $({\bf k}, k_0+1)$. Then
\begin{enumerate}[{\rm (i)}]
\item $\chi(\mathbf{x}, J, \mathbf{I}, M) =  \bigtriangleup^{(k_0,\mathrm{\bf
k})}P(n_0, {\bf n}, J,  \mathbf{I}, M).$
\item $\chi(\mathbf{x}, J, \mathbf{I}, M) > 0$ if and only if $\bf x$ is a minimal joint reduction.
\item $\chi(\mathbf{x}, J, \mathbf{I}, M) = \chi(\mathbf{x}, J,
\mathbf{I}, N) + \chi(\mathbf{x}, J, \mathbf{I}, M/N).$
\end{enumerate}
\enlargethispage{0.5cm}
\end{proposition}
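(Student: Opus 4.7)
Parts (i) and (ii) will require little new work. (i) is exactly the content of Remark \ref{re4.1a}(i), which combines Note \ref{note1}, Lemma \ref{le2.8}, and the identity $\bigtriangleup^{(k_0+1,\mathbf{k})}F(n_0,\mathbf{n},J,\frak R(\mathbf{I};M))=\bigtriangleup^{(k_0,\mathbf{k})}P(n_0,\mathbf{n},J,\mathbf{I},M)$ derived just before Note \ref{note1}. For (ii), I would combine (i) with Proposition \ref{co4.a}: by (i) the characteristic equals the constant $\bigtriangleup^{(k_0,\mathbf{k})}P(n_0,\mathbf{n},J,\mathbf{I},M)$, which is always nonnegative by Note \ref{n2}, so positivity is equivalent to nonvanishing, and Proposition \ref{co4.a} identifies the latter with $\mathbf{x}$ being a minimal joint reduction.

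Part (iii) carries the real content. The plan is to lift the additivity problem to the category of graded $\frak R(\mathbf{I};A)$-modules, where additivity of $\chi$ is easy (Remark \ref{re2.8a}(i)), and to compare the intersection module $\mathcal{N}:=\bigoplus_{\mathbf{n}\geq\mathbf{0}}(\mathbb{I}^{\mathbf{n}}M\cap N)T^{\mathbf{n}}$ with the Rees module $\frak R(\mathbf{I};N)$ via an Artin--Rees sandwich. By Corollary \ref{no3.1}, $\mathbf{x}$ is a joint reduction of $\mathbf{I},J$ with respect to both $N$ and $M/N$, so all three characteristics in the statement are defined. The short exact sequence
\[
0\to\mathcal{N}\to\frak R(\mathbf{I};M)\to\frak R(\mathbf{I};M/N)\to 0
\]
of graded $\frak R(\mathbf{I};A)$-modules is exact because $\frak R(\mathbf{I};M/N)_{\mathbf{n}}=\mathbb{I}^{\mathbf{n}}M/(\mathbb{I}^{\mathbf{n}}M\cap N)$; graded additivity combined with Note \ref{note1} then yields
\[
\chi(\mathbf{x},J,\mathbf{I},M)=\chi(n_0,\mathbf{n},\mathbf{x}_T,J,\mathcal{N})+\chi(\mathbf{x},J,\mathbf{I},M/N).
\]

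The main obstacle will be to identify $\chi(n_0,\mathbf{n},\mathbf{x}_T,J,\mathcal{N})$ with $\chi(\mathbf{x},J,\mathbf{I},N)$; this is where Artin--Rees enters. Multigraded Artin--Rees supplies $c\geq 0$ such that $\mathbb{I}^{\mathbf{n}}M\cap N=\mathbb{I}^{\mathbf{n}-c\mathbf{1}}(\mathbb{I}^{c\mathbf{1}}M\cap N)$ for $\mathbf{n}\geq c\mathbf{1}$, and since $\mathbb{I}^{c\mathbf{1}}M\cap N\subset N$ this sandwiches
\[
\mathbb{I}^{\mathbf{n}}N\subset\mathbb{I}^{\mathbf{n}}M\cap N\subset\mathbb{I}^{\mathbf{n}-c\mathbf{1}}N
\]
for large $\mathbf{n}$. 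Introducing the graded $\frak R(\mathbf{I};A)$-module $U$ with $U_{\mathbf{n}}=\mathbb{I}^{\mathbf{n}-c\mathbf{1}}N$ (a shift of $\frak R(\mathbf{I};N)$ by $c\mathbf{1}$), these become graded submodule inclusions $\frak R(\mathbf{I};N)\subset\mathcal{N}\subset U$. Remark \ref{re2.8a}(ii) says the shift is invisible to $\chi$, so $\chi(U)=\chi(\frak R(\mathbf{I};N))=\chi(\mathbf{x},J,\mathbf{I},N)$. Applying graded additivity to the two short exact sequences of the sandwich then produces
\[
\chi(\mathcal{N}/\frak R(\mathbf{I};N))+\chi(U/\mathcal{N})=\chi(U)-\chi(\frak R(\mathbf{I};N))=0,
\]
and both quotient characteristics are nonnegative by Remark \ref{re2.8a}(iii), so each must vanish. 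This will force $\chi(n_0,\mathbf{n},\mathbf{x}_T,J,\mathcal{N})=\chi(\mathbf{x},J,\mathbf{I},N)$, and substituting into the displayed formula completes (iii).
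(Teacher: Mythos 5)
Your proof is correct, and parts (i) and (ii) are handled exactly as in the paper. For part (iii) you use the same underlying machinery as the paper's proof — the Artin--Rees control of $\mathbb{I}^{\mathbf{n}}M\cap N$, additivity of $\chi(n_0,\mathbf{n},\mathbf{x}_T,J,-)$ on short exact sequences of graded $\frak R(\mathbf{I};A)$-modules, nonnegativity of $\chi$ from Remark \ref{re2.8a}(iii), and shift invariance — but you package it somewhat differently. The paper first isolates an explicit monotonicity lemma (its equation (\ref{eq2}): $\chi(\mathbf{x},J,\mathbf{I},M')\le\chi(\mathbf{x},J,\mathbf{I},M)$ for submodules $M'\subset M$), then works at the level of $A$-modules: it passes from $M$ to $I^kM$ with $k$ an Artin--Rees constant, uses the identity $\mathbb{I}^{\mathbf{n}}\bigl(I^kM/(I^kM\cap N)\bigr)=\mathbb{I}^{\mathbf{n}}(I^kM)/\mathbb{I}^{\mathbf{n}}(I^kM\cap N)$ to get the short exact sequence $0\to\frak R(\mathbf{I};I^kM\cap N)\to\frak R(\mathbf{I};I^kM)\to\frak R\bigl(\mathbf{I};I^kM/(I^kM\cap N)\bigr)\to 0$, and then sandwiches $\chi(\mathbf{x},J,\mathbf{I},I^kM\cap N)$ between $\chi(\mathbf{x},J,\mathbf{I},I^kN)$ and $\chi(\mathbf{x},J,\mathbf{I},N)$ using monotonicity and $\chi(I^kN)=\chi(N)$. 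You instead decompose $\frak R(\mathbf{I};M)$ directly via the intersection module $\mathcal{N}$ and identify $\chi(\mathcal{N})$ with $\chi(\mathbf{x},J,\mathbf{I},N)$ by sandwiching $\mathcal{N}$ between $\frak R(\mathbf{I};N)$ and its backward shift $U$; the monotonicity step is absorbed into the observation that the two quotient characteristics are nonnegative and sum to zero. This is a bit tidier and avoids the detour through $I^kM$, at the cost of needing some care in defining $U_{\mathbf{n}}=\mathbb{I}^{\mathbf{n}-c\mathbf{1}}N$ and the inclusion $\mathcal{N}_{\mathbf{n}}\subset U_{\mathbf{n}}$ in degrees $\mathbf{n}\not\geq c\mathbf{1}$; since $\chi$ sees only large degrees one can truncate or adopt a convention there, so this is harmless, whereas the paper's route sidesteps the issue by shifting $M$ itself. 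Both arguments are equivalent in substance, and yours is a perfectly good alternative presentation.
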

\begin{proof}   (i) follows from Remark \ref{re4.1a} (i) and (ii) follows from (i) and Proposition \ref{co4.a}. The  proof of (iii):
Note that $\bf x$ is also a joint reduction of
$\mathbf{I}, J$ with respect to $N, M/N$ by Corollary
 \ref{no3.1}. First, we prove that if
$M'$ is a submodule of $M$, then
\begin{equation}\label{eq2}\chi(\mathbf{x}, J, \mathbf{I},
M') \leqslant \chi(\mathbf{x}, J, \mathbf{I}, M).\end{equation}
Indeed, since $\bf x$ is a joint reduction of $\mathbf{I}, J$ with
respect to $M,$ it follows that
  $\mathbf{x}_T$ is  a joint reduction of
$\mathbf{I}_T, J$ with respect to $\frak R(\mathbf{I}; M)$ as
mentioned  above. Hence  $\mathbf{x}_T$ is also a joint reduction of
$\mathbf{I}_T, J$ with respect to $\frak R(\mathbf{I}; M)\big/\frak R(\mathbf{I}; M')$.  On the other hand, by Corollary
\ref{no3.1}, $\bf x$ is a joint reduction of $\mathbf{I}, J$ with
respect to $M',$ and so
  $\mathbf{x}_T$ is  a joint reduction of
$\mathbf{I}_T, J$ with respect to $\frak R(\mathbf{I}; M')$.  Consider  the exact sequence of $\frak
R(\mathbf{I}; A)$-modules:
$$0\longrightarrow \frak R(\mathbf{I}; M') \longrightarrow \frak R(\mathbf{I}; M)\longrightarrow \frak
R(\mathbf{I}; M)\big/\frak R(\mathbf{I}; M')\longrightarrow 0.$$
 Since $\chi(n_0,\mathbf{n},\mathbf{x}_T, J, \frak R(\mathbf{I}; M))$ and  $\chi(n_0,\mathbf{n},\mathbf{x}_T, J, \frak R(\mathbf{I}; M'))$ are constants by Note \ref{note1}, it follows by Remark \ref{re2.8a} (i) that $ \chi\big(n_0,\mathbf{n}, \mathbf{x}_T, J, \frak R(\mathbf{I}; M)\big/\frak R(\mathbf{I}; M')\big)$ also is a constant  and
 $$\begin{aligned}\chi(n_0,\mathbf{n},\mathbf{x}_T, J, \frak R(\mathbf{I}; M)) &= \chi(n_0,\mathbf{n},\mathbf{x}_T, J, \frak R(\mathbf{I}; M'))\\ &+
 \chi\big(n_0,\mathbf{n},\mathbf{x}_T, J, \frak R(\mathbf{I}; M)\big/\frak R(\mathbf{I}; M')\big).\end{aligned}$$
 On the other hand, by Remark \ref{re2.8a} (iii),  $\chi\big(n_0,\mathbf{n},\mathbf{x}_T, J, \frak R(\mathbf{I}; M)\big/\frak
 R(\mathbf{I}; M')\big)  \geqslant 0$. Hence $\chi(n_0,\mathbf{n},\mathbf{x}_T, J, \frak R(\mathbf{I}; M)) \geqslant
 \chi(n_0,\mathbf{n},\mathbf{x}_T, J, \frak R(\mathbf{I}; M')).$ So we get (\ref{eq2}), that means $$\chi(\mathbf{x}, J, \mathbf{I}, M) \geqslant
 \chi(\mathbf{x}, J, \mathbf{I}, M').$$
Since $N$ is a submodule of $M,$ by Artin-Rees Lemma, there exists
an integer $k>0$ such that
$\mathbb{I}^{\mathbf{n}+k\mathbf{1}}M\cap N =
\mathbb{I}^{\mathbf{n}}(I^{k}M\cap N)$ for all ${\bf n \ge 0}.$
Fix this integer $k$. Then since
$\mathbb{I}^{\mathbf{n}+k\mathbf{1}}M\cap N =
\mathbb{I}^{\mathbf{n}}(I^{k}M\cap N)$ for all ${\bf n \ge 0}$, we
have
$$\mathbb{I}^{\mathbf{n}}\Big(\dfrac{I^kM}{I^kM\cap N}\Big)=  \dfrac{\mathbb{I}^{\mathbf{n}+k\mathbf{1}}M}
{\mathbb{I}^{\mathbf{n}+k\mathbf{1}}M\cap I^kM\cap N}=
\dfrac{\mathbb{I}^{\mathbf{n}+k\mathbf{1}}M}{\mathbb{I}
^{\mathbf{n}+k\mathbf{1}}M\cap N}=
\dfrac{\mathbb{I}^{\mathbf{n}}(I^kM)}{\mathbb{I}^{\mathbf{n}}(I^kM\cap
N)}$$ for all ${\bf n \ge 0}$. Hence we get the exact sequence
$$0\longrightarrow \frak R(\mathbf{I}; I^kM\cap N) \longrightarrow \frak R(\mathbf{I}; I^kM)
\longrightarrow \frak R\Big(\mathbf{I}; \dfrac{I^kM}{I^kM\cap N}\Big)
\longrightarrow 0.$$ Consequently  by Remark \ref{re2.8a} (i),
$$
\chi(\mathbf{x}, J, \mathbf{I}, I^kM) = \chi(\mathbf{x}, J,
\mathbf{I}, I^kM\cap N) + \chi\Big(\mathbf{x}, J, \mathbf{I},
\dfrac{I^kM}{I^kM\cap N}\Big).
$$
On the other hand, by Remark \ref{re4.1a} (ii), $\chi(\mathbf{x},
J, \mathbf{I}, I^kM) = \chi(\mathbf{x}, J, \mathbf{I}, M)$ and
$$\chi\Big(\mathbf{x}, J, \mathbf{I}, \dfrac{I^kM}{I^kM\cap N}\Big) =
\chi(\mathbf{x}, J, \mathbf{I}, I^k(M/N)) = \chi(\mathbf{x}, J,
\mathbf{I}, M/N).$$ So $ \chi(\mathbf{x}, J, \mathbf{I}, M) =
\chi(\mathbf{x}, J, \mathbf{I}, I^kM\cap N) + \chi(\mathbf{x}, J,
\mathbf{I}, M/N).$ Next, by (\ref{eq2}) and since  $I^kN \subset
I^kM\cap N \subset N,$  it follows that
$$\chi(\mathbf{x}, J, \mathbf{I}, I^kN)\leqslant \chi(\mathbf{x}, J,
 \mathbf{I}, I^kM\cap N) \leqslant \chi(\mathbf{x}, J, \mathbf{I}, N).$$
And by Remark \ref{re4.1a} (ii), we have $\chi(\mathbf{x}, J,
\mathbf{I}, I^kN)= \chi(\mathbf{x}, J, \mathbf{I}, N).$ Hence
$$\chi(\mathbf{x}, J, \mathbf{I}, I^kM\cap N) = \chi(\mathbf{x}, J,
\mathbf{I}, N).$$  Consequently,  we obtain $\chi(\mathbf{x}, J,
\mathbf{I}, M) = \chi(\mathbf{x}, J, \mathbf{I}, N) +
\chi(\mathbf{x}, J, \mathbf{I}, M/N).$
\end{proof}

\section{Mixed multiplicities of ideals}

This section studies mixed multiplicities of maximal degrees. The following  facts  will show  the
effectiveness of our approach in this paper.

Set  $  \mathrm{\bf I}^{[\mathrm{\bf
k}]}= I_1^{[k_1]},
  \ldots,I_d^{[k_d]}; {\bf n^k}= n_1^{k_1}\cdots n_d^{k_d}; \mathbf{k}!= k_1!\cdots
k_d!; |{\bf k}| = k_1 + \cdots + k_d.$   Recall that  $I =  I_1\cdots I_d;$ $\overline {M}= M/0_M: I^\infty$ and $q=\dim \overline {M}$.

If  $I
\nsubseteq \sqrt{\mathrm{Ann}(M)},$ then $\overline {M} \not= 0.$
 Remember that by
\cite[Proposition 3.1]{Vi} (see \cite{MV}) the Hilbert polynomial $P(n_0, {\bf n}, J, \mathbf{I}, M)$ of the Hilbert function
$\ell\Big(\dfrac{J^{n_0}\mathbb{I}^{\bf
n}M}{J^{n_0+1}\mathbb{I}^{\bf n}M}\Big)$ has total
degree $q-1$. And in this section we assume that $I
\nsubseteq \sqrt{\mathrm{Ann}(M)}.$
If  write the terms of total degree $q-1$ of
$P(n_0, {\bf n}, J, \mathbf{I}, M)$ in the form $$\sum_{k_0 + |\mathbf{k}| = q - 1}
e(J^{[k_0+1]},\mathrm{\bf I}^{[\mathrm{\bf k}]};
M)\frac{n_0^{k_0}\mathbf{n}^\mathbf{k}}{k_0!\mathbf{k}!},$$ then
 $e(J^{[k_0+1]},
\mathbf{I}^{[\mathbf{k}]}; M)$ is called   the {\it mixed
multiplicity} (or the {\it original mixed multiplicities})  of $M$ with respect to $J, \bf I$ of the type
$(k_0+1, \bf k)$ (see e.g. \cite{MV, Ve, Vi}).

Now we would like to recall objects mentioned in \cite{TV1}
 which concern the maximal terms in the Hilbert  polynomial.
It is well known that one  can
write
$$P(n_0, {\bf n}, J, \mathbf{I}, M) = \sum_{(k_0, \mathbf{k})\in \mathbb{N}^{d+1}
}e(J^{[k_0+1]}, \mathbf{I}^{[\mathbf{k}]}; M)\binom{n_0 +
k_0}{k_0}\binom{\mathbf{n}+\mathbf{k}}{\mathbf{k}},$$
 where
$\binom{\mathbf{n + k}}{\bf n}= \binom{n_1 + k_1}{n_1}\cdots
\binom{n_d + k_d}{n_d}.$
 Then
$e(J^{[k_0+1]}, \mathbf{I}^{[\mathbf{k}]}; M)$ are integers.

\begin{note}\label{note2}
$\bigtriangleup^{(k_0,\mathrm{\bf
k})}P(n_0, {\bf n}, J,  \mathbf{I}, M)$ is a constant if and only if
 $e(J^{[h_0+1]},
\mathbf{I}^{[\mathbf{h}]}; M)=0$ for all $(h_0, \mathbf{h}) >
(k_0,\mathbf{k}).$ In this case,
$\bigtriangleup^{(k_0,\mathrm{\bf
k})}P(n_0, {\bf n}, J,  \mathbf{I}, M)= e(J^{[k_0+1]}, \mathbf{I}^{[\mathbf{k}]}; M)$ (see \cite[Proposition 2.4 (ii)]{TV1}).
\end{note}

From this fact, one  can consider a larger class than the class of original
mixed multiplicities. This is the reason why \cite{TV1}  gave the following concept.

\begin{definition}\label{de2.0+1} We call that $e(J^{[k_0+1]},
\mathbf{I}^{[\mathbf{k}]}; M)$ is  the {\it  mixed
multiplicity of
 maximal degrees of $M$ with respect   to  ideals  $J,\mathrm{\bf I}$ of the type
$(k_0+1,\mathrm{\bf k})$} if $e(J^{[h_0+1]},
\mathbf{I}^{[\mathbf{h}]}; M)=0$ for all $(h_0, \mathbf{h}) >
(k_0,\mathbf{k}).$
\end{definition}

An example in  \cite{TV1} calculated  all   mixed
multiplicities of
 maximal degrees in a specific case. For any $(k_0, \mathbf{k})\in \mathbb{N}^{d+1}$, it is not true,
in general, that the mixed multiplicity of  maximal degrees of the type $(k_0+1, \bf
k)$  is  defined. However, the definiteness of  mixed
multiplicities of
 maximal degrees  is shown by the following.

\begin{remark}\label{re2.00}  Let  $(k_0, \mathbf{k})\in \mathbb{N}^{d+1}.$
Then the mixed
multiplicity of  maximal degrees of $M$ with respect to ideals $J,\mathrm{\bf I}$ of
the type $(k_0+1,\mathrm{\bf k})$ is defined if and only if $\bigtriangleup^{(k_0,\mathrm{\bf k})}P(n_0, {\bf n}, J,
\mathbf{I}, M)$ is a constant. This is equivalent to that
there exists  a joint
reduction  $\mathbf{x}$  of  $\mathbf{I}, J$ with respect to $M$ of the type
$(\mathrm{\bf k}, k_0+1)$ by Proposition \ref{thm2.11}.
\end{remark}

Then the main result of the paper is the following theorem.

\begin{theorem}\label{thm1.vt} Let $N$ be an $A$-submodule of $M.$ Assume that  the mixed multiplicity  of maximal degrees of $M$ with respect to
 $J, \bf I$ of the type $(k_0+1, {\bf k})$ is defined and let   $\bf x$ be a
joint reduction of the type $(\mathbf{k}, k_0+1)$ of $\mathbf{I}, J$ with respect to $M$. Then $\chi(\mathbf{x}, J, \mathbf{I}, M)$ is independent of $\bf x$ and we have

\begin{enumerate}[{\rm (i)}]
\item $e(J^{[k_0+1]}, \mathbf{I}^{[\mathbf{k}]}; M) =  \bigtriangleup^{(k_0,\mathrm{\bf
k})}P(n_0, {\bf n}, J,  \mathbf{I}, M)=\chi(\mathbf{x}, J, \mathbf{I}, M).$
\item $e(J^{[k_0+1]}, \mathbf{I}^{[\mathbf{k}]}; M) > 0 $ if and only if $\mathbf{x}$ is a minimal joint reduction.

\item $e(J^{[k_0+1]}, \mathbf{I}^{[\mathbf{k}]}; M) = e(J^{[k_0+1]}, \mathbf{I}^{[\mathbf{k}]}; N) +
e(J^{[k_0+1]}, \mathbf{I}^{[\mathbf{k}]}; M/N).$
\end{enumerate}
\end{theorem}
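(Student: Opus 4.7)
The plan is to assemble Theorem \ref{thm1.vt} directly from Proposition \ref{le4.2}, Note \ref{note2}, Corollary \ref{no3.1}, and Remark \ref{re2.00}; very little new work should be required, since the heavy lifting (building $\chi$, showing additivity of $\chi$ on $A$-modules, characterizing positivity, and identifying $\chi$ with the difference of Hilbert polynomials) has already been done in Section 3.

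For part (i), I would argue as follows. By hypothesis, the mixed multiplicity of maximal degrees of $M$ with respect to $J,\mathbf{I}$ of type $(k_0+1,\mathbf{k})$ is defined, so by Definition \ref{de2.0+1} we have $e(J^{[h_0+1]},\mathbf{I}^{[\mathbf{h}]};M)=0$ for all $(h_0,\mathbf{h})>(k_0,\mathbf{k})$. Note \ref{note2} then gives $\bigtriangleup^{(k_0,\mathbf{k})}P(n_0,\mathbf{n},J,\mathbf{I},M) = e(J^{[k_0+1]},\mathbf{I}^{[\mathbf{k}]};M)$. By Remark \ref{re2.00}, the existence of a joint reduction $\mathbf{x}$ of the prescribed type is assured, and Proposition \ref{le4.2}(i) identifies $\chi(\mathbf{x},J,\mathbf{I},M)$ with this same difference of Hilbert polynomials. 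Since the right-hand side depends only on $J,\mathbf{I},M$, the characteristic $\chi(\mathbf{x},J,\mathbf{I},M)$ is automatically independent of the choice of joint reduction $\mathbf{x}$.

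Part (ii) is then immediate from (i) combined with Proposition \ref{le4.2}(ii): we have $e(J^{[k_0+1]},\mathbf{I}^{[\mathbf{k}]};M)>0$ if and only if $\chi(\mathbf{x},J,\mathbf{I},M)>0$, which is equivalent to $\mathbf{x}$ being a minimal joint reduction.

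Part (iii) is the one that needs a small bridging step, because the statement uses the mixed multiplicities for $N$ and $M/N$, which must first be shown to be defined. Starting from the exact sequence $0\to N\to M\to M/N\to 0$ and the fact that $\mathbf{x}$ is a joint reduction of $\mathbf{I},J$ with respect to $M$, Corollary \ref{no3.1} shows that $\mathbf{x}$ is also a joint reduction with respect to $N$ and $M/N$, and that $\bigtriangleup^{(k_0,\mathbf{k})}P(n_0,\mathbf{n},J,\mathbf{I},N)$ and $\bigtriangleup^{(k_0,\mathbf{k})}P(n_0,\mathbf{n},J,\mathbf{I},M/N)$ are also constants. By Note \ref{note2} this means the mixed multiplicities of maximal degrees of $N$ and $M/N$ of type $(k_0+1,\mathbf{k})$ are defined. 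Then applying (i) to each of $M$, $N$, $M/N$, and invoking Proposition \ref{le4.2}(iii) (additivity of $\chi$ on $A$-modules) yields
\[
e(J^{[k_0+1]},\mathbf{I}^{[\mathbf{k}]};M)=\chi(\mathbf{x},J,\mathbf{I},M)=\chi(\mathbf{x},J,\mathbf{I},N)+\chi(\mathbf{x},J,\mathbf{I},M/N),
\]
which equals $e(J^{[k_0+1]},\mathbf{I}^{[\mathbf{k}]};N)+e(J^{[k_0+1]},\mathbf{I}^{[\mathbf{k}]};M/N)$, finishing the proof. The only place one has to be careful is this bridging step for (iii): one must not assume a priori that the mixed multiplicities of maximal degrees descend to $N$ and $M/N$, but invoke Corollary \ref{no3.1} together with Note \ref{note2} to secure their definiteness before applying the additivity of $\chi$.
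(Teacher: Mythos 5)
Your proposal is correct and follows essentially the same route as the paper: Remark \ref{re4.1a}(i) (which you re-derive) for independence, Proposition \ref{le4.2}(i) together with Note \ref{note2} for (i), Proposition \ref{le4.2}(ii) (equivalently Proposition \ref{co4.a}) for (ii), and Proposition \ref{le4.2}(iii) for (iii). The only difference is that for (iii) you spell out explicitly, via Corollary \ref{no3.1} and Note \ref{note2}, why the identity $\chi(\mathbf{x},J,\mathbf{I},\cdot)=e(J^{[k_0+1]},\mathbf{I}^{[\mathbf{k}]};\cdot)$ also holds for $N$ and $M/N$ before invoking the additivity of $\chi$; the paper leaves this bridging step implicit, so your version is slightly more careful while being substantively identical.
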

\begin{proof} By Remark \ref{re4.1a}(i),  $\chi(\mathbf{x}, J, \mathbf{I}, M)$ is independent of $\bf x$. By Proposition \ref {le4.2}(i) and Note \ref{note2}, we get (i).  (ii) follows from (i) and Proposition \ref{co4.a}.  By (i) and Proposition \ref {le4.2}(iii), we obtain (iii).
\end{proof}

Next from Theorem \ref{thm1.vt} (i) and (iii),  we immediately get the following.

\begin{corollary}\label{co14.3b} Let $0\longrightarrow N\longrightarrow M \longrightarrow
P\longrightarrow 0$ be an exact sequence of $A$-modules.  Assume that  the mixed multiplicity  of maximal degrees of $M$ with respect to
 $J, \bf I$ of the type $(k_0+1, {\bf k})$ is defined and let   $\bf x$ be a
joint reduction of the type $(\mathbf{k}, k_0+1)$ of $\mathbf{I}, J$ with respect to $M$.  Then
\begin{enumerate}[{\rm (i)}]
\item $e(J^{[k_0+1]}, \mathbf{I}^{[\mathbf{k}]}; M) = e(J^{[k_0+1]},
\mathbf{I}^{[\mathbf{k}]}; N) + e(J^{[k_0+1]},
\mathbf{I}^{[\mathbf{k}]}; P).$
\item$\chi(\mathbf{x}, J, \mathbf{I}, M) = \chi(\mathbf{x}, J, \mathbf{I}, N)+\chi(\mathbf{x}, J, \mathbf{I}, P).$
\end{enumerate}
\end{corollary}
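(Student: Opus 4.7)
My plan is to treat this as a direct corollary of Theorem \ref{thm1.vt} combined with Corollary \ref{no3.1}, since all the hard work has already been done. The corollary has two parts, and the main preliminary task is checking that the expressions $e(J^{[k_0+1]}, \mathbf{I}^{[\mathbf{k}]}; N)$, $e(J^{[k_0+1]}, \mathbf{I}^{[\mathbf{k}]}; P)$, $\chi(\mathbf{x}, J, \mathbf{I}, N)$ and $\chi(\mathbf{x}, J, \mathbf{I}, P)$ actually make sense in the setting of the statement.

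First I would verify the well-definedness. Since the mixed multiplicity of maximal degrees of $M$ of type $(k_0+1, \mathbf{k})$ is defined, by Remark \ref{re2.00} (or equivalently Note \ref{note2}) the difference $\bigtriangleup^{(k_0,\mathbf{k})}P(n_0, \mathbf{n}, J, \mathbf{I}, M)$ is a constant. Corollary \ref{no3.1} then gives two things at once: (a) the differences $\bigtriangleup^{(k_0,\mathbf{k})}P(n_0, \mathbf{n}, J, \mathbf{I}, N)$ and $\bigtriangleup^{(k_0,\mathbf{k})}P(n_0, \mathbf{n}, J, \mathbf{I}, P)$ are also constants, so by Note \ref{note2} the mixed multiplicities of maximal degrees of $N$ and $P$ of type $(k_0+1, \mathbf{k})$ are defined; and (b) the joint reduction $\mathbf{x}$ of $\mathbf{I}, J$ with respect to $M$ is automatically a joint reduction with respect to $N$ and $P$ too, so $\chi(\mathbf{x}, J, \mathbf{I}, N)$ and $\chi(\mathbf{x}, J, \mathbf{I}, P)$ are meaningful quantities via Note \ref{note1}.

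For part (i), since $P \cong M/N$ via the exact sequence, I would just invoke Theorem \ref{thm1.vt}(iii) applied to the pair $N \subseteq M$, which directly gives
\[
e(J^{[k_0+1]}, \mathbf{I}^{[\mathbf{k}]}; M) = e(J^{[k_0+1]}, \mathbf{I}^{[\mathbf{k}]}; N) + e(J^{[k_0+1]}, \mathbf{I}^{[\mathbf{k}]}; M/N),
\]
and then substitute $M/N \cong P$. For part (ii), having confirmed in the preliminary step that all three $\chi$-values are defined for the same joint reduction $\mathbf{x}$, I would apply Theorem \ref{thm1.vt}(i) to each of $M$, $N$, $P$ to rewrite each $\chi$ as the corresponding $e$, and then use part (i) to conclude. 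Alternatively, one may appeal directly to Proposition \ref{le4.2}(iii), which yields $\chi(\mathbf{x}, J, \mathbf{I}, M) = \chi(\mathbf{x}, J, \mathbf{I}, N) + \chi(\mathbf{x}, J, \mathbf{I}, M/N)$, and then use the isomorphism $M/N \cong P$.

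There is no serious obstacle here; the only mild point requiring attention is making explicit that both the multiplicity $e$ and the Euler-Poincaré characteristic $\chi$ are defined for the submodule $N$ and quotient $P$ under our hypotheses on $M$, which is exactly what Corollary \ref{no3.1} ensures. Everything else is a one-line invocation of the already-proved Theorem \ref{thm1.vt}.
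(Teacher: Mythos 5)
Your proof is correct and takes essentially the same route the paper intends: the paper derives this corollary directly from Theorem \ref{thm1.vt}(i) and (iii), exactly as you do, with part (i) being Theorem \ref{thm1.vt}(iii) after identifying $P\cong M/N$ and part (ii) obtained by converting $\chi$ to $e$ via Theorem \ref{thm1.vt}(i). Your additional preliminary check of well-definedness via Corollary \ref{no3.1} is a reasonable and harmless expansion of what the paper leaves implicit.
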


Now from Theorem \ref{thm1.vt} we prove the  additivity and reduction formula for
mixed multiplicities of maximal degrees  in  the following.

\begin{corollary}\label{co4.4a}   Assume that  the mixed multiplicity  of maximal degrees of $M$ with respect to
 $J, \bf I$ of the type $(k_0+1, {\bf k})$ is defined and let   $\bf x$ be a
joint reduction of the type $(\mathbf{k}, k_0+1)$ of $\mathbf{I}, J$ with respect to $M$. Set $ \Pi = \mathrm{Min}(A/\mathrm{Ann}(M)).$
 Then
 \begin{enumerate}[{\rm (i)}]
\item  $ e(J^{[k_0+1]},\mathrm{\bf I}^{[\mathrm{\bf k}]};M)= \sum_{\frak
p \in \Pi}\ell({M}_{\frak p})
 e(J^{[k_0+1]},\mathrm{\bf I}^{[\mathrm{\bf k}]};A/\frak p).$
\item $\chi(\mathbf{x}, J, \mathbf{I}, M) = \sum_{\frak
p \in \Pi}\ell({M}_{\frak p})
 \chi(\mathbf{x}, J, \mathbf{I}, A/\frak p).$
\end{enumerate}

 \end{corollary}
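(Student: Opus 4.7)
The plan is to exploit a prime filtration of $M$ together with the additivity already supplied by Theorem \ref{thm1.vt}(iii), then regroup the contributions according to the minimal primes of $\mathrm{Ann}(M)$. First, choose a prime filtration $0=M_0\subsetneq M_1\subsetneq\cdots\subsetneq M_n=M$ with $M_i/M_{i-1}\cong A/\mathfrak{p}_i$ for primes $\mathfrak{p}_i\in\mathrm{Supp}(M)$. By Corollary \ref{no3.1}, the given joint reduction $\mathbf{x}$ of $\mathbf{I},J$ with respect to $M$ remains a joint reduction with respect to every $M_i$ and every factor $A/\mathfrak{p}_i$; hence Proposition \ref{thm2.11} combined with Note \ref{note2} ensures that the mixed multiplicity of maximal degrees of type $(k_0+1,\mathbf{k})$ is defined for each module in the filtration. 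Iterating Theorem \ref{thm1.vt}(iii) along the exact sequences $0\to M_{i-1}\to M_i\to A/\mathfrak{p}_i\to 0$ yields
$$e(J^{[k_0+1]},\mathbf{I}^{[\mathbf{k}]};M)\;=\;\sum_{i=1}^{n} e(J^{[k_0+1]},\mathbf{I}^{[\mathbf{k}]}; A/\mathfrak{p}_i).$$

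Next, regroup by prime ideal. For each $\mathfrak{p}\in\Pi$, localize the filtration at $\mathfrak{p}$: a factor $A/\mathfrak{p}_j$ becomes zero unless $\mathfrak{p}_j\subseteq\mathfrak{p}$, and the inclusion $\mathfrak{p}_j\supseteq\mathrm{Ann}(M)$ combined with the minimality of $\mathfrak{p}$ over $\mathrm{Ann}(M)$ forces $\mathfrak{p}_j=\mathfrak{p}$ in the remaining cases. Consequently $\ell(M_\mathfrak{p})=\#\{i:\mathfrak{p}_i=\mathfrak{p}\}$, and the indices with $\mathfrak{p}_i\in\Pi$ together contribute exactly $\sum_{\mathfrak{p}\in\Pi}\ell(M_\mathfrak{p})\, e(J^{[k_0+1]},\mathbf{I}^{[\mathbf{k}]}; A/\mathfrak{p})$.

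The principal obstacle is to argue that the remaining indices, those with $\mathfrak{p}_i\notin\Pi$, contribute zero. When $I\subseteq\mathfrak{p}_i$ (in particular for $\mathfrak{p}_i=\mathfrak{m}$) one has $\mathbb{I}^{\mathbf{n}}(A/\mathfrak{p}_i)=0$ for $|\mathbf{n}|\ge 1$, so $P(n_0,\mathbf{n},J,\mathbf{I},A/\mathfrak{p}_i)\equiv 0$ and every coefficient vanishes. Otherwise $A/\mathfrak{p}_i$ is a domain with $I\nsubseteq\mathfrak{p}_i$; since $\mathfrak{p}_i\supsetneq\mathfrak{q}$ for some $\mathfrak{q}\in\Pi$, one has $\dim\overline{A/\mathfrak{p}_i}=\dim A/\mathfrak{p}_i<\dim A/\mathfrak{q}$, so the total degree of $P(n_0,\mathbf{n},J,\mathbf{I},A/\mathfrak{p}_i)$ is strictly smaller than that of $P(n_0,\mathbf{n},J,\mathbf{I},A/\mathfrak{q})$. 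Combining this dimension drop with the transferred joint reduction (Corollary \ref{no3.1}) produces a joint reduction of $\mathbf{I},J$ with respect to $A/\mathfrak{p}_i$ of strictly smaller type, so $\mathbf{x}$ fails to be a minimal joint reduction of $A/\mathfrak{p}_i$. The equivalence (i)$\Leftrightarrow$(v) in Proposition \ref{co4.a} then forces the constant $\bigtriangleup^{(k_0,\mathbf{k})}P(n_0,\mathbf{n},J,\mathbf{I},A/\mathfrak{p}_i)$, which equals $e(J^{[k_0+1]},\mathbf{I}^{[\mathbf{k}]}; A/\mathfrak{p}_i)$ by Note \ref{note2}, to vanish.

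Combining the above yields (i). For (ii), apply Theorem \ref{thm1.vt}(i), which identifies $\chi(\mathbf{x},J,\mathbf{I},N)$ with $e(J^{[k_0+1]},\mathbf{I}^{[\mathbf{k}]};N)$ for every module $N$ satisfying the hypotheses. Since Corollary \ref{no3.1} transports these hypotheses from $M$ to every $A/\mathfrak{p}\in\Pi$, statement (ii) follows at once from (i).
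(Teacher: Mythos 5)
Your framework — prime filtration, iterated application of Theorem~\ref{thm1.vt}(iii), and regrouping by minimal primes — agrees with the paper's, and the counting step $\ell(M_\mathfrak{p})=\#\{i:\mathfrak{p}_i=\mathfrak{p}\}$ for $\mathfrak{p}\in\Pi$ is standard and fine. The gap is exactly where the real work sits: showing $e(J^{[k_0+1]},\mathbf{I}^{[\mathbf{k}]};A/\mathfrak{p}_i)=0$ for $\mathfrak{p}_i\in\mathrm{Supp}(M)\setminus\Pi$ with $I\nsubseteq\mathfrak{p}_i$. You correctly observe that $\dim A/\mathfrak{p}_i<\dim A/\mathfrak{q}$ for some $\mathfrak{q}\in\Pi$, and so $\deg P(n_0,\mathbf{n},J,\mathbf{I},A/\mathfrak{p}_i)<\deg P(n_0,\mathbf{n},J,\mathbf{I},A/\mathfrak{q})$. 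But from there you assert, without argument, that this ``dimension drop combined with the transferred joint reduction produces a joint reduction of strictly smaller type,'' hence $\mathbf{x}$ is not minimal, hence the constant vanishes by Proposition~\ref{co4.a}(i)$\Leftrightarrow$(v). That inference does not follow. Having a smaller total degree for $P(\ldots,A/\mathfrak{p}_i)$ does not by itself yield a sequence of type $(\mathbf{h},h_0+1)<(\mathbf{k},k_0+1)$ which is a joint reduction with respect to $A/\mathfrak{p}_i$: for that one would need some difference $\bigtriangleup^{(h_0,\mathbf{h})}P(n_0,\mathbf{n},J,\mathbf{I},A/\mathfrak{p}_i)$ with $(h_0,\mathbf{h})<(k_0,\mathbf{k})$ to be a constant (Proposition~\ref{thm2.11}), and this is not controlled by the total degree.

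The point the argument misses is specific to maximal degrees: here $k_0+|\mathbf{k}|$ need not equal $\deg P(n_0,\mathbf{n},J,\mathbf{I},M)$, and $\bigtriangleup^{(k_0,\mathbf{k})}P$ can be a constant while the polynomial has strictly larger total degree. A pure degree argument does settle the base case $|\mathbf{k}|=0$, but only because Remark~\ref{re2.00a} gives the implication ``$\bigtriangleup^{(k_0,\mathbf{0})}P(\ldots,M)$ constant $\Rightarrow k_0+1\ge\dim\overline{M}$,'' which has no analogue for $|\mathbf{k}|>0$. The paper's proof accordingly proceeds by induction on $|\mathbf{k}|$: it handles $|\mathbf{k}|=0$ with the degree argument, and for $|\mathbf{k}|>0$ picks a weak-(FC)-element $x\in I_1\setminus\mathfrak{p}$, applies Proposition~\ref{pro2.4b} to drop to $\mathbf{k}-\mathbf{e}_1$, shows every $\mathfrak{q}\in\mathrm{Supp}(A/(x)+\mathfrak{p})$ lies in $\mathrm{Supp}(M/xM)\setminus\mathrm{Min}(A/\mathrm{Ann}(M/xM))$, and then invokes the inductive hypothesis together with a prime filtration of $A/(x)+\mathfrak{p}$. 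You would need to supply this (or some substitute) to close the gap. Part (ii) of your argument, deducing the $\chi$ version from (i) via Theorem~\ref{thm1.vt}(i), is fine once (i) is repaired.
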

\begin{proof} (ii) follows from (i) and Theorem \ref{thm1.vt} (i). Now we prove (i).
 Let $$0 = M_0 \subseteq M_1 \subseteq M_2
  \subseteq\cdots \subseteq M_u=M$$ be a prime  filtration of $M$,
i.e., $M_{i+1}/M_i \cong A/P_i$ where
  $P_i$ is a prime ideal for all $0 \le i \le u-1$. Then $\Pi \subset \mathrm{Ass}(M)\subset \{P_0, \ldots, P_{u-1}\}
  \subset\mathrm{Supp}(M)$.
  By Theorem \ref{thm1.vt}(iii), we have \begin{equation}\label{eq2a} e(J^{[k_0+1]},\mathrm{\bf I}^{[\mathrm{\bf k}]};M)= \sum_{i=0}^{u-1}e(J^{[k_0+1]},\mathrm{\bf I}^{[\mathrm{\bf k}]}; A/P_i).\end{equation}

  Now we prove by induction on $|\bf k|$ that $e(J^{[k_0+1]},\mathrm{\bf I}^{[\mathrm{\bf k}]}; A/\frak p) = 0$
 for any prime ideal $\frak p \in \mathrm{Supp}(M)\setminus \mathrm{Min}(A/\mathrm{Ann}(M)).$ Indeed,
consider the case that $|{\bf k}| = 0$. If $I
\subset \frak
 p,$ then  $I(A/\frak p) = 0$, and so  $e(J^{[k_0+1]},\mathrm{\bf I}^{[\mathrm{\bf k}]};A/\frak p) =
 0$. Hence  we consider  the case that $\frak p\nsupseteq I.$
In this case, $\frak p \in \mathrm{Supp}(\overline{M}) \setminus
\mathrm{Min}(A/\mathrm{Ann}(\overline{M})),$ here $\overline{M} = M/0_M: I^\infty$  by \cite [Remark 3.3]{VT1}.  So $\dim A/\frak p < \dim \overline{M}.$  Since $\bigtriangleup^{(k_0,\mathrm{\bf
0})}P(n_0, {\bf n}, J,  \mathbf{I}, M)$ is a constant,  it follows that
$k_0+1 \ge \dim
\overline{M}$  by   Remark \ref{re2.00a}. Hence $\dim A/\frak p < k_0+1.$ Thus $\deg P(n_0, {\bf n}, J,  \mathbf{I}, A/\frak p) < k_0$ by   Remark \ref{re2.00a}. Therefore  $ \bigtriangleup^{(k_0,\mathrm{\bf
0})}P(n_0, {\bf n}, J,  \mathbf{I}, A/\frak p) = 0$.
 Hence
$e(J^{[k_0+1]},\mathrm{\bf I}^{[\mathrm{\bf 0}]}; A/\frak p)  = 0$ by Theorem \ref{thm1.vt} (i).

Consider the case that $|\mathbf{k}| > 0$. Recall that if $I
\subset \frak
 p,$ then $e(J^{[k_0+1]},\mathrm{\bf I}^{[\mathrm{\bf k}]};A/\frak p)  =
 0.$ Suppose that $\frak p\nsupseteq I.$
And without loss of generality, we can assume that $k_1>0$.  Then by \cite[Proposition
2.3]{VDT} (see \cite[Remark 1]{Vi}),  there exists $x\in I_1 \setminus
\frak p$ such that $x$ is a weak-(FC)-element of $\mathbf{I}, J$
with respect to $M$ and $A/\frak p$.   So by Proposition \ref{pro2.4b},
$$\bigtriangleup^{(k_0,\mathrm{\bf k})}P(n_0, {\bf n}, J,
\mathbf{I}, M)  =
\bigtriangleup^{(k_0,\mathrm{\bf k} -  \mathbf{e}_1)}P(n_0, {\bf n}, J,
\mathbf{I},  M\big/xM),$$
$$\bigtriangleup^{(k_0,\mathrm{\bf k})}P(n_0, {\bf n}, J,
\mathbf{I}, A/\frak p)  =
\bigtriangleup^{(k_0,\mathrm{\bf k} -  \mathbf{e}_1)}P(n_0, {\bf n}, J,
\mathbf{I},  A/(x)+\frak p).$$
Hence $\bigtriangleup^{(k_0,\mathrm{\bf k} -  \mathbf{e}_1)}P(n_0, {\bf n}, J,
\mathbf{I},  M\big/xM)$ and $\bigtriangleup^{(k_0,\mathrm{\bf k} -  \mathbf{e}_1)}P(n_0, {\bf n}, J,
\mathbf{I},  A/(x)+\frak p)$ are constants. Therefore the mixed multiplicities  of maximal degrees of $M/xM$ and $A/(x,\frak p)$ of the type $(k_0+1, \mathrm{\bf k} -  \mathbf{e}_1)$ are defined by Remark \ref{re2.00}, and moreover
$$e(J^{[k_0+1]},\mathrm{\bf I}^{[\mathrm{\bf k}]}; A/\frak p) = e(J^{[k_0+1]},\mathrm{\bf I}^{[\mathrm{\bf k}-\mathbf{e}_1]};A/(x)+\frak p)$$ by Note \ref{note2}. Let $\frak q$ be  an arbitrary
ideal in $\mathrm{Supp}(A/(x)+\frak p)$. Set $\bar A =
A/\mathrm{Ann}({M}).$ Denote by $\bar x, \bar {\frak p}, \overline
{(x)+\frak p}, \bar {\frak q}$ the images of $x, {\frak p},
{(x)+\frak p}, {\frak q}$ in $\bar A,$ respectively. Then since $x
\notin \frak p$ and $\frak p \in \mathrm{Supp}(M)\setminus
\mathrm{Min}(A/\mathrm{Ann}({M})),$  it follows that
$\mathrm{ht}_{\bar A} \overline {(x)+\frak p}
> 1.$ Consequence, $\mathrm{ht}_{\bar A/(\bar x)} [\overline {(x)+\frak p}/(\bar
x)]
> 0.$ So $\mathrm{ht}_{\bar A/(\bar x)} [\bar {\frak q}/(\bar
x)]
> 0.$  Hence
$\frak q \notin \mathrm{Min}(A/(x)+ \mathrm{Ann}({M})).$
 Note that
$\mathrm{Min}(A/(x)+\mathrm{Ann}({M})) =
\mathrm{Min}(A/\mathrm{Ann}({M}/xM))$ and $\frak q \in
\mathrm{Supp}({M}/xM).$  Thus $\frak q \in
\mathrm{Supp}({M}/xM)\setminus
\mathrm{Min}(A/\mathrm{Ann}({M}/xM)).$  Since $|\mathrm{\bf k}-
\mathbf{e}_1| = |\mathrm{\bf k}| -1, $
 applying the inductive hypothesis
for $M/x{M}$, we have $e(J^{[k_0+1]},\mathrm{\bf I}^{[\mathrm{\bf k}-\mathbf{e}_1]}; A/\frak q) = 0.$  So we obtain
\begin{equation}\label{eq2d}e(J^{[k_0+1]},\mathrm{\bf I}^{[\mathrm{\bf k}-\mathbf{e}_1]};A/\frak q) = 0\end{equation} for every $\frak q \in
\mathrm{Supp}(A/(x)+\frak p)$. Now, considering a prime filtration
of $A/(x) + \frak p$, by (\ref{eq2a}) and (\ref{eq2d}), we get
$e(J^{[k_0+1]},\mathrm{\bf I}^{[\mathrm{\bf k}-\mathbf{e}_1]};A/(x)+\frak p) = 0.$ So $e(J^{[k_0+1]},\mathrm{\bf I}^{[\mathrm{\bf k}]}; A/\frak p) = 0$ for every prime ideal $\frak
p \in  \mathrm{Supp}(M)\setminus \mathrm{Min}(A/\mathrm{Ann}({M})).$
The induction is complete. Therefore if  $P_i \notin
\Pi$, then
$e(J^{[k_0+1]},\mathrm{\bf I}^{[\mathrm{\bf k}]}; A/P_i) = 0.$  Note
that for any $\frak p \in \Pi$,  the number of times $\frak p$
appears in the sequence $P_0, \ldots, P_{u-1}$ is $\ell({M}_{\frak
p})$ since $\frak p \in \mathrm{Min}(A/\mathrm{Ann}({M})).$ Hence by
(\ref{eq2a}), we obtain $ e(J^{[k_0+1]},\mathrm{\bf I}^{[\mathrm{\bf k}]};M)= \sum_{\frak
p \in \Pi}\ell({M}_{\frak p})
 e(J^{[k_0+1]},\mathrm{\bf I}^{[\mathrm{\bf k}]};A/\frak p).$
  \end{proof}

\begin{remark}\label{re3.8} Set $ \Lambda =
\{\frak p\in \mathrm{Min}(A/\mathrm{Ann}(\overline{M}))\mid   \dim
A/\frak p \geq k_0 + 1 + |\mathbf{k}|\}$.  Then $\Lambda \subset \Pi.$ From the proof of Corollary \ref{co4.4a},  $$e(J^{[k_0+1]},\mathrm{\bf I}^{[\mathrm{\bf k}]};A/\frak p) = 0$$ if $\frak p \in \Pi \setminus \mathrm{Min}(A/\mathrm{Ann}(\overline{M})).$ In the case that $\dim A/\frak p < k_0 + 1 + |\mathbf{k}|$, then  $$\bigtriangleup^{(k_0,\mathrm{\bf
k})}P(n_0, {\bf n}, J,  \mathbf{I}, A/\frak p) = 0$$ since $\deg P(n_0, {\bf n}, J,  \mathbf{I}, A/\frak p) = \dim A/\frak p - 1$
by Remark \ref{re2.00a}.
 Hence $$e(J^{[k_0+1]},\mathrm{\bf I}^{[\mathrm{\bf k}]};A/\frak p) = 0$$  by Note \ref{note2}. Therefore  in Corollary \ref{co4.4a} we can replace the set $\Pi$ by the set $\Lambda$, i.e., $$e(J^{[k_0+1]},\mathrm{\bf I}^{[\mathrm{\bf k}]};M) = \sum_{\frak
p \in \Lambda}\ell({M}_{\frak p})
 e(J^{[k_0+1]},\mathrm{\bf I}^{[\mathrm{\bf k}]}; A/\frak p).$$
\end{remark}

 Finally, consider the case that $M$ has rank $r > 0$.    Recall that  $M$
has {\it rank} $r$ if $U^{-1}M$ (the localization of $M$ with
respect to $U$) is a free $U^{-1}A$-module of rank $r,$ here $U$
is the set of all  non-zero divisors of $A.$ Then since
$\mathrm{Ann}(U^{-1}M)=0,$  it follows that
 $\mathrm{Ann}(M) =0.$ Hence   $\Pi = \mathrm{Min}(A/\mathrm{Ann}(M))= \mathrm{Min}A.$
 Then $M_{\frak p}\cong (A_{\frak p})^r$ for all $\frak p \in \Pi$. So $\ell({M}_{\frak p})= r.\ell({A}_{\frak p})$.
 Therefore by Corollary \ref{co4.4a} (i),
$$e(J^{[k_0+1]},\mathrm{\bf I}^{[\mathrm{\bf k}]};M)=
\mathrm{rank}(M)\sum_{\frak p \in \Pi}\ell({A}_{\frak
p})e(J^{[k_0+1]},\mathrm{\bf I}^{[\mathrm{\bf k}]};A/\frak p)$$ and
   $e(J^{[k_0+1]},\mathrm{\bf I}^{[\mathrm{\bf k}]};A)= \sum_{\frak p
\in \Pi}\ell({A}_{\frak
p})e(J^{[k_0+1]},\mathrm{\bf I}^{[\mathrm{\bf k}]};A/\frak p).$
 Thus we get  $$e(J^{[k_0+1]},\mathrm{\bf I}^{[\mathrm{\bf k}]};M)=
 e(J^{[k_0+1]},\mathrm{\bf I}^{[\mathrm{\bf k}]};A)\mathrm{rank}(M).$$
 Consequently, we obtain the following corollary.

 \begin{corollary}\label{co4.4}  Let $M$ be an  $A$-module of positive rank.
 Assume that  the mixed multiplicity  of maximal degrees of $M$ with respect to
 $J, \bf I$ of the type $(k_0+1, {\bf k})$ is defined and $\bf x$ is a
joint reduction of $\mathbf{I}, J$ with respect to $M$ of the type $(\mathbf{k}, k_0 + 1)$.
Then  we
  have
\begin{enumerate}[{\rm (i)}]
\item  $e(J^{[k_0+1]},\mathrm{\bf I}^{[\mathrm{\bf k}]};M)=
e(J^{[k_0+1]},\mathrm{\bf I}^{[\mathrm{\bf
k}]};A)\mathrm{rank}(M).$
\item $\chi(\mathbf{x}, J, \mathbf{I},M)=
\chi(\mathbf{x}, J, \mathbf{I},A)\mathrm{rank}(M).$
\end{enumerate}
\end{corollary}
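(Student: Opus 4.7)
The plan is to derive both statements as direct consequences of the additivity/reduction formula in Corollary \ref{co4.4a} together with Theorem \ref{thm1.vt}(i), by exploiting what rank forces on the minimal primes of $A/\mathrm{Ann}(M)$.

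First I would unpack the rank hypothesis. Since $M$ has rank $r>0$, the localization $U^{-1}M$ at the set $U$ of non-zero-divisors is $U^{-1}A$-free of rank $r$, so $\mathrm{Ann}(U^{-1}M) = 0$. Because every minimal prime of $A$ lies in $U$, this forces $\mathrm{Ann}(M) = 0$, and hence
\[
\Pi := \mathrm{Min}(A/\mathrm{Ann}(M)) = \mathrm{Min}(A).
\]
For each $\mathfrak{p} \in \Pi$ the localization $M_{\mathfrak{p}}$ is an $A_{\mathfrak{p}}$-module obtained by further localizing $U^{-1}M$, so $M_{\mathfrak{p}} \cong (A_{\mathfrak{p}})^{r}$ and therefore $\ell(M_{\mathfrak{p}}) = r\cdot \ell(A_{\mathfrak{p}})$.

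Next I would apply the additivity formula of Corollary \ref{co4.4a}(i) twice, once to $M$ and once to $A$. The application to $M$ is direct from the hypotheses. For the application to $A$, I would first observe that the mixed multiplicity of maximal degrees for $A$ of type $(k_0+1,\mathbf{k})$ is automatically defined: from Theorem \ref{thm1.vt}(i) applied to $M$, the polynomial $\triangle^{(k_0,\mathbf{k})}P(n_0,\mathbf{n},J,\mathbf{I},M)$ is constant, so by Corollary \ref{no3.1} applied along a prime filtration (or by the argument in the proof of Corollary \ref{co4.4a} that controls $P(n_0,\mathbf{n},J,\mathbf{I},A/\mathfrak{p})$ for $\mathfrak{p}\in\Pi$), the corresponding difference is constant for $A$ as well. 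Substituting the formula $\ell(M_{\mathfrak{p}}) = r\cdot \ell(A_{\mathfrak{p}})$ into Corollary \ref{co4.4a}(i) for $M$ and comparing with Corollary \ref{co4.4a}(i) for $A$ yields
\[
e(J^{[k_0+1]},\mathbf{I}^{[\mathbf{k}]};M) = r \sum_{\mathfrak{p}\in\Pi}\ell(A_{\mathfrak{p}})\,e(J^{[k_0+1]},\mathbf{I}^{[\mathbf{k}]};A/\mathfrak{p}) = \mathrm{rank}(M)\cdot e(J^{[k_0+1]},\mathbf{I}^{[\mathbf{k}]};A),
\]
which is part (i). Part (ii) then follows immediately, since by Theorem \ref{thm1.vt}(i) we have $\chi(\mathbf{x},J,\mathbf{I},M) = e(J^{[k_0+1]},\mathbf{I}^{[\mathbf{k}]};M)$ and $\chi(\mathbf{x},J,\mathbf{I},A) = e(J^{[k_0+1]},\mathbf{I}^{[\mathbf{k}]};A)$.

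There is no serious obstacle here; the statement is essentially a formal corollary once Corollary \ref{co4.4a}(i) and Theorem \ref{thm1.vt}(i) are in hand. The only mild subtlety is to verify that the mixed multiplicity of maximal degrees of $A$ with respect to $J,\mathbf{I}$ of the same type is itself defined, so that Corollary \ref{co4.4a}(i) may be invoked for $A$; this is the one place where one must use that the constancy of the appropriate difference of Hilbert polynomials passes from $M$ to $A$ (equivalently, to each $A/\mathfrak{p}$ with $\mathfrak{p}\in\Pi$), a point already controlled in the preceding arguments.
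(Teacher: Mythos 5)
Your proof is correct and follows essentially the same route as the paper: positive rank forces $\mathrm{Ann}(M)=0$, hence $\Pi=\mathrm{Min}(A)$ and $\ell(M_{\mathfrak{p}})=\mathrm{rank}(M)\cdot\ell(A_{\mathfrak{p}})$ for every $\mathfrak{p}\in\Pi$, and comparing Corollary \ref{co4.4a}(i) applied to $M$ with the same formula applied to $A$ gives (i); part (ii) is then immediate from Theorem \ref{thm1.vt}(i). The one point you rightly flag---that the mixed multiplicity of maximal degrees of $A$ must itself be defined so Corollary \ref{co4.4a}(i) may be invoked for $A$---is left implicit in the paper, but your justification ``Corollary \ref{no3.1} applied along a prime filtration'' runs in the wrong direction (that corollary passes constancy \emph{down} to submodules and quotients, not up from filtration quotients); the clean fix is either that $\mathrm{Ann}(A)=0=\mathrm{Ann}(M)$ so the joint reduction $\mathbf{x}$ for $M$ is already one for $A$ by the annihilator comparison used in the proof of Corollary \ref{no3.1}, or that positive rank lets one embed $A$ as a submodule of $M$, after which Corollary \ref{no3.1} applies directly.
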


Returning to the original mixed multiplicities, we have the following notes.

 \begin{remark}\label{re4.1}  In the case  that $k_0 + |\mathbf{k}| = q - 1,$ from the above results we receive  respective results on  the original mixed
multiplicities as follows.
 \begin{enumerate}[{\rm (i)}]
 \item Theorem \ref{thm1.vt} (ii) also characterizes    the positivity of the original mixed
multiplicities of ideals in terms of minimal joint reductions. Note that this result has not been known in early works.
\item  From Theorem \ref{thm1.vt} (iii) we  obtain   the additivity  of  original mixed
multiplicities  \cite[Corollary 3.9]{VT1}.  Note that in \cite{VT1} the additivity  was showed via the
additivity and reduction formula which had to be proved by another
 approach. Moreover, it seems  that statements of the additivity for multiplicities of maximal degrees
 are shorter and more natural than  for  original mixed multiplicities.
\item From Corollary \ref{co4.4a} (i) and Remark \ref{re3.8} we get   the additivity and reduction formula  of original  mixed
multiplicities \cite[Theorem 3.2]{VT1}.
\item Finally, from  Corollary \ref{co4.4} (i)  we receive  \cite[Theorem 3.4]{VT2}.
\end{enumerate}
 \end{remark}

\noindent{\bf Acknowledgement:} {
\it This paper is accepted by Journal of Algebra and Its Applications,
 doi: ttps://doi.org/10.1142/S0219498821500262. Special thanks are due to the referee whose remarks substantially
improved the paper}.


\end{document}